\newcommand{\R}{{\Bbb R}}
\newcommand{\C}{{\Bbb C}}
\newcommand{\D}{{\Bbb D}}
\newcommand{\diag}{\text{\upshape diag\,}}
\newcommand{\re}{\text{\upshape Re\,}}
\DeclareMathOperator{\dist}{dist}
\def\XXint#1#2#3{{\setbox0=\hbox{$#1{#2#3}{\int}$}
\vcenter{\hbox{$#2#3$}}\kern-.5\wd0}}
\newtheorem{theorem}{Theorem}
\newtheorem{proposition}{Proposition}[section]
\newtheorem{lemma}[proposition]{Lemma}
\newtheorem{definition}[proposition]{Definition}
\newtheorem{assumption}[proposition]{Assumption}
\newtheorem{remark}[proposition]{Remark}
\newtheorem{figuretext}[proposition]{Figure}
\newtheorem{RHproblem}[proposition]{RH problem}
\numberwithin{equation}{section}
\title[Long-time asymptotics for an integrable evolution equation]
{Long-time asymptotics for an integrable evolution equation with a $3 \times 3$ Lax pair}
\author{C. Charlier and J. Lenells}
\address{Department of Mathematics, KTH Royal Institute of Technology, \\ 100 44 Stockholm, Sweden.}
\email{cchar@kth.se}
\email{jlenells@kth.se}
\begin{document}
\begin{abstract}
We derive a Riemann--Hilbert representation for the solution of an integrable nonlinear evolution equation with a $3 \times 3$ Lax pair. We use the derived representation to obtain formulas for the long-time asymptotics.
\end{abstract}

\maketitle

\noindent
{\small{\sc AMS Subject Classification (2020)}: 35G25, 35Q15, 37K15.}

\noindent
{\small{\sc Keywords}: Asymptotics, spectral analysis, inverse scattering transform, initial value problem.}


\section{Introduction}
We consider the nonlinear evolution equation
\begin{align}\label{3x3eq}
& iq_t - \frac{1}{\sqrt{3}}q_{xx} + 2\sqrt{3}\bar{q}\bar{q}_x =0,  
\end{align}
where $q(x,t)$  is a complex-valued function of a space variable $x \in \R$  and a time variable $t \geq 0$.
Equation (\ref{3x3eq}) is a type of quadratic derivative nonlinear Schr\"odinger (NLS) equation: instead of the cubic nonlinearity $|q|^2q$ of the standard NLS equation, it features the quadratic nonlinearity $\bar{q}\bar{q}_x$. One of the interesting properties of equation (\ref{3x3eq}) is that it is integrable. The integrability of (\ref{3x3eq}) is a consequence of the fact that it is a reduction of a system that was studied and shown to be integrable in \cite{L3x3}.
 While the Lax pair for the NLS equation involves $2 \times 2$ matrices, the Lax pair for (\ref{3x3eq}) involves $3 \times 3$ matrices; this corresponds to the fact that the isospectral problem for (\ref{3x3eq}) is a third-order rather than a second-order differential equation.

The simple structure of equation (\ref{3x3eq}) in combination with its integrability suggests that it could be relevant for various applications (the wide applicability of integrable equations has been given a heuristic explanation by Calogero \cite{C1991}). However, we will not attempt to find such applications here; instead we adopt the point of view that (\ref{3x3eq}) is sufficiently interesting as a prototypical example of a nonlinear PDE with a $3 \times 3$ matrix Lax pair to warrant further study. 
In fact, within the class of integrable equations with $3 \times 3$ matrix Lax pairs---which includes equations such as the Boussinesq \cite{B1872}, Sasa--Satsuma \cite{SS1991}, Sawada--Kotera \cite{SK1974}, Kaup--Kupershmidt \cite{K1980}, Degasperis--Procesi \cite{DP1999}, and $3$-wave resonant interaction \cite{ZM1973} equations---equation (\ref{3x3eq}) is one of the simplest and most natural generalizations of the NLS equation, at least from the point of view of the Lax pair. Indeed, while the $x$ and $t$ parts of the Lax pair for NLS involve the matrices $k\sigma_3$ and $k^2 \sigma_3$, respectively, where $\sigma_3$ is the diagonal matrix whose entries are the second roots of unity, i.e., $\sigma_3 = \diag(1,-1)$, the $x$ and $t$ parts of (\ref{3x3eq}) involve the matrices $kJ$ and $k^2 J^2$, where $J$ is the diagonal matrix whose entries are the third roots of unity, i.e., $J = \mbox{diag}(\omega, \omega^{2}, 1)$ with $\omega = e^{2\pi i/3}$. The Lax pairs of NLS and (\ref{3x3eq}) also exhibit other similarities. This suggests that (\ref{3x3eq}) constitutes a prototypical example within the above class which is valuable as a mathematical playground. 


In this paper, we first develop an inverse scattering transform formalism for the solution of the initial-value problem for (\ref{3x3eq}). We then use steepest descent techniques to describe the asymptotic behavior of the solution for large times. More precisely, assuming that $q(x,t)$ is a Schwartz class solution of \eqref{3x3eq} with prescribed initial data $q_0(x) = q(x,0)$ at time $t = 0$, we show that $q(x,t)$ can be expressed in terms of the solution $m(x,t,k)$ of a $3 \times 3$ matrix Riemann--Hilbert (RH) problem. The jump contour $\Gamma$ of this RH problem consists of the three lines $\R \cup \omega \R \cup \omega^2 \R$ where $\omega = e^{2\pi i/3}$, see Figure \ref{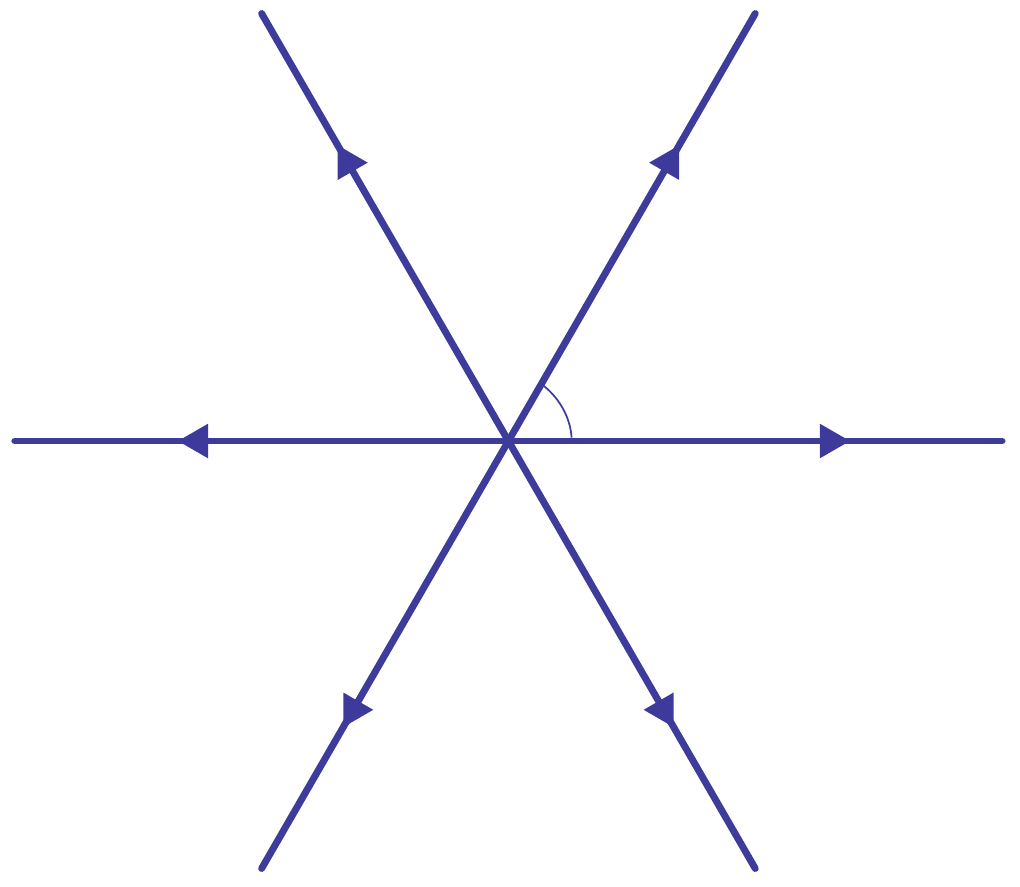}. The jump matrices are expressed in terms of two reflection coefficients, $r_{1}(k)$ and $r_{2}(k)$, which are defined in terms of the initial data $q_0(x)$ via the solution of linear integral equations (the direct scattering problem). The RH problem provides the solution of the inverse scattering problem in the sense that the value of $q(x,t)$ at any later time $t > 0$ can be recovered from $m(x,t,k)$ via the relation
$$q(x,t) = \lim_{k\to \infty} \big(k m(x,t,k)\big)_{13}.$$
The large $t$ behavior of $m(x,t,k)$ can be extracted by performing a Deift-Zhou \cite{DZ1993} steepest descent analysis of the RH problem and this leads to an asymptotic formula for $q(x,t)$. 

Our main results are stated in Theorem \ref{RHth}. The main difficulty in the proof of Theorem \ref{RHth} lies in the fact that the Lax pair---and hence also the RH problem---involves $3\times 3$ matrices. This means that the implementation of the direct and inverse scattering, as well as the steepest descent analysis of the RH problem, in general requires substantial effort. However, it turns out that the Lax pair derived in \cite{L3x3} for \eqref{3x3eq} is similar in structure to the Lax pair of the so-called ``good'' Boussinesq equation, and this  observation will save us a lot of effort. Indeed, the direct and inverse scattering problems for the ``good'' Boussinesq equation were studied in detail in  \cite{CL} and a steepest descent analysis of the corresponding RH problem was carried out in \cite{CLW}. Consequently, by formulating our problems appropriately, we will be able to recycle many of the results and proofs from \cite{CL, CLW}. In particular, the steepest descent analysis of \cite{CLW} can be applied almost unchanged to obtain asymptotics also for the solution of (\ref{3x3eq}). The upshot is that by exploiting the connection to the ``good'' Boussinesq equation and drawing on \cite{CL, CLW}, we can obtain results for (\ref{3x3eq}) with relatively little effort.

Other works analyzing the long-time asymptotics for integrable evolution equations with $3 \times 3$  Lax pairs include \cite{HL2020, LGX2018, LG2019} (for the Sasa--Satsuma equation), \cite{CLW} (for the ``good'' Boussinesq equation), and \cite{BS2013, BLS2017} (for the Degasperis--Procesi equation).

Our main theorem is stated in Section \ref{mainsec}. The direct scattering problem is considered in Section \ref{specsec}. The proof of the main theorem is presented in Section \ref{proofsec}.

\begin{figure}
\begin{center}
 \begin{overpic}[width=.4\textwidth]{Gamma.pdf}
  \put(101,41.5){\small $\Gamma$}
 \put(56,47){\small $\pi/3$}
 \put(80,60){\small $D_1$}
 \put(48,74){\small $D_2$}
 \put(17,60){\small $D_3$}
 \put(17,25){\small $D_4$}
 \put(48,12){\small $D_5$}
 \put(80,25){\small $D_6$}
  \put(81,38){\small $1$}
 \put(67.6,68){\small $2$}
 \put(29,68){\small $3$}
 \put(17,38){\small $4$}
 \put(30,17){\small $5$}
 \put(67,17){\small $6$}
   \end{overpic}
     \begin{figuretext}\label{Gamma.pdf}
       The contour $\Gamma$ and the open subsets $\{D_n\}_1^6$ of the complex $k$-plane.
     \end{figuretext}
     \end{center}
\end{figure}

\section{Main results}\label{mainsec}
Let $\mathcal{S}(\R)$ denote the Schwartz class of rapidly decreasing functions on the real line, and let $q_0 \in \mathcal{S}(\R)$ be some given initial data for (\ref{3x3eq}). 
The RH problem for $m$ is formulated in terms of two spectral functions $r_1(k)$  and $r_2(k)$, which depend only on $q_0$. The functions $r_1(k)$  and $r_2(k)$ are referred to as reflection coefficients and are defined as follows. Let $\omega = e^{2\pi i/3}$ and define $\{l_j(k), z_j(k)\}_{j=1}^3$ by
\begin{align}\label{ljzjdef}
&l_j(k) = \omega^j k, \quad z_j(k) = \omega^{2j} k^{2}, \qquad k \in \C.
\end{align}
Let the matrix-valued function $\mathcal{U}(x)$ be given by
\begin{align}\label{mathsfUdef intro}
\mathcal{U}(x) = &\; \begin{pmatrix} 0 & (1-\omega^2) \overline{q_0(x)} & (1-\omega)q_0(x) \\
(1-\omega)q_0(x) & 0 & (1-\omega^2)\overline{q_0(x)} \\
(1-\omega^2)\overline{q_0(x)} & (1-\omega)q_0(x) & 0 \end{pmatrix}.
\end{align} 
Define the $3 \times 3$-matrix valued eigenfunctions $X(x,k)$ and $X^A(x,k)$ as the unique solutions of the Volterra integral equations
\begin{subequations}\label{XXAdef intro}
\begin{align}  
 & X(x,k) = I - \int_x^{\infty} e^{(x-x')\widehat{\mathcal{L}(k)}} \big(\mathcal{U}(x')X(x',k)\big) dx',
	\\
 & X^A(x,k) = I + \int_x^{\infty} e^{-(x-x')\widehat{\mathcal{L}(k)}} \big(\mathcal{U}(x')^T X^A(x',k)\big) dx',	
\end{align}
\end{subequations}
where $\mathcal{L} = \diag(l_1 , l_2 , l_3)$, $\widehat{\mathcal{L}}$ denotes the operator which acts on a $3 \times 3$ matrix $B$ by $\widehat{\mathcal{L}}B = [\mathcal{L}, B]$ (and thus $e^{\widehat{\mathcal{L}}}B = e^\mathcal{L} B e^{-\mathcal{L}}$), and $\mathcal{U}^T$ denotes the transpose of $\mathcal{U}$. Define $s(k)$ and $s^A(k)$ by 
\begin{align}\label{sdef}
& s(k) = I - \int_\R e^{-x\widehat{\mathcal{L}(k)}} \big(\mathcal{U}(x) X(x,k)\big)dx,
 	\\ \label{sAdef}
& s^A(k) = I + \int_\R e^{x\widehat{\mathcal{L}(k)}} \big(\mathcal{U}(x)^T X^A(x,k)\big)dx.
\end{align}
The reflection coefficients $r_1(k)$  and $r_2(k)$ are defined by
\begin{align}\label{3x3r1r2def}
\begin{cases}
r_1(k) = \frac{(s(k))_{12}}{(s(k))_{11}}, & k \in (0,\infty),
	\\ 
r_2(k) = \frac{(s^A(k))_{12}}{(s^A(k))_{11}}, \quad & k \in (-\infty,0).
\end{cases}
\end{align}	
We note that the definition (\ref{3x3r1r2def}) of $r_1(k)$  and $r_2(k)$ coincides with the analogous definition in \cite{CL} except that the matrix $\mathsf{U}$ of \cite{CL} has been replaced by the matrix $\mathcal{U}$ defined in \eqref{mathsfUdef intro}.

Let $D_{1},\ldots,D_{6}$ be the open sectors shown in Figure \ref{Gamma.pdf}. The functions $(s(k))_{11}$ and $(s^A(k))_{11}$, which appear in the denominators of \eqref{3x3r1r2def}, are analytic in $D_{1}$ and $D_{4}$, respectively. The case when $(s(k))_{11}$ and $(s^A(k))_{11}$ have a finite number of simple zeros in $D_1$ and $D_4$ (related to the presence of solitons) can be handled as in \cite{L3x3}, but to avoid unnecessary technicalities, we will assume that no such zeros are present.
 
\begin{assumption}[Absence of solitons]\label{solitonlessassumption}\upshape
Assume that $(s(k))_{11}$ and $(s^A(k))_{11}$ are nonzero for $k \in \bar{D}_1$ and $k \in \bar{D}_4$, respectively.
\end{assumption}

The next lemma, whose proof is given in Section \ref{r1r2subsec}, establishes several important properties of $r_{1}(k)$ and $r_{2}(k)$.

\begin{lemma}[Properties of $r_1(k)$ and $r_2(k)$]\label{r1r2lemma}
Suppose $q_0 \in \mathcal{S}(\R)$ is such that Assumption \ref{solitonlessassumption} holds. Then $r_1:(0,\infty) \to \C$ and $r_2:(-\infty,0) \to \C$ are well-defined by (\ref{3x3r1r2def}) and have the following properties:
\begin{enumerate}[$(i)$]
 \item $r_1 \in C^\infty((0,\infty))$ and $r_2 \in C^\infty((-\infty,0))$. 
 
 \item The functions $r_1(k)$, $r_2(k)$, and their derivatives $\partial_{k}^{j}r_{\ell}(k)$ have continuous boundary values at $k=0$ for $\ell = 1,2$ and $j = 0,1,2,\ldots$, and there exist expansions
 \begin{subequations}\label{r1r2atzero}
\begin{align}
& r_{1}(k) = r_{1}(0) + r_{1}'(0)k + \tfrac{1}{2}r_{1}''(0)k^{2} + \cdots, & & k \to 0, \ k >0, \\
& r_{2}(k) = r_{2}(0) + r_{2}'(0)k + \tfrac{1}{2}r_{2}''(0)k^{2} + \cdots, & & k \to 0, \ k <0,
\end{align}
\end{subequations}
which can be differentiated termwise any number of times.

\item $r_1(k)$ and $r_2(k)$ are rapidly decreasing as $|k| \to \infty$, i.e.,
\begin{subequations}\label{r1r2rapiddecay}
\begin{align}\label{r1rapiddecay}
& \max_{j=0,1,\dots,N}\sup_{k \in (0,\infty)} (1+|k|)^N |\partial_k^jr_1(k)| < \infty,  
	\\
& \max_{j=0,1,\dots,N} \sup_{k \in (-\infty, 0)} (1+|k|)^N|\partial_k^jr_2(k)| < \infty,
\end{align}
\end{subequations}
for each integer $N \geq 0$. 
\item $|r_{1}(k)|<1$ for all $k \geq 0$ and $|r_{2}(k)| <1$ for all $k \leq 0$.
\end{enumerate} 
\end{lemma}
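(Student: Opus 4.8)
The plan is to follow the strategy that works for the ``good'' Boussinesq equation in \cite{CL}, transporting each step via the substitution $\mathsf{U} \leftrightarrow \mathcal{U}$, and only checking that the algebraic peculiarities of $\mathcal{U}$ (its entries involve $q_0$, $\overline{q_0}$, and the cube roots of unity $\omega$) do not spoil any of the estimates. The underlying analytic input is standard Volterra theory: for $k$ in the appropriate closed sector, the kernel $e^{(x-x')\widehat{\mathcal{L}(k)}}$ in \eqref{XXAdef intro} is bounded (the exponents $l_i - l_j$ have the right sign on the relevant rays), so the integral equations for $X(x,k)$ and $X^A(x,k)$ have unique bounded solutions, these solutions are smooth in $k$ away from the rays where $\re(l_i - l_j) = 0$, and — because $q_0 \in \mathcal{S}(\mathbb{R})$ — they, together with all their $k$-derivatives, decay rapidly as $|k| \to \infty$ and extend continuously (with all derivatives) down to $k = 0$. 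All of this is proved for the analogous objects in \cite{CL} and only uses the structure of $\widehat{\mathcal{L}}$ and the Schwartz decay of the potential, both of which are identical here; so I would state this as a lemma on $X, X^A, s, s^A$ (citing \cite{CL}) and then read off properties of $r_1, r_2$ from it.

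Concretely: parts $(i)$, $(ii)$, $(iii)$ follow by combining (a) analyticity/smoothness of $(s(k))_{11}, (s(k))_{12}$ on $\bar D_1$ and of $(s^A(k))_{11}, (s^A(k))_{12}$ on $\bar D_4$, (b) Assumption \ref{solitonlessassumption}, which guarantees the denominators in \eqref{3x3r1r2def} stay bounded away from zero on the closed sectors (hence $r_1, r_2$ are quotients of smooth functions with nonvanishing denominator, so $C^\infty$ on the open half-lines and $C^\infty$ up to $k=0$ with termwise-differentiable Taylor expansions), and (c) the rapid decay of $s(k) - I$ and $s^A(k) - I$ as $|k|\to\infty$: since $(s(k))_{11} \to 1$ and $(s(k))_{12} \to 0$ rapidly (with all derivatives), the Leibniz/quotient rule gives \eqref{r1r2rapiddecay}. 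For part $(iv)$, the bound $|r_1(k)| < 1$, $|r_2(k)| < 1$ should come from a symmetry/unitarity relation satisfied by $s(k)$. In the NLS and Boussinesq settings one has a symmetry of the Lax pair — here presumably of the form $\mathcal{U}(x)^\dagger = \mathcal{A}\,\mathcal{U}(x)\,\mathcal{A}^{-1}$ for some fixed matrix $\mathcal{A}$ built from $\omega$ (the placement of $q_0$ and $\overline{q_0}$ in \eqref{mathsfUdef intro} is clearly engineered for exactly this) — which propagates to a relation on $X$ and hence on $s(k)$ for $k$ real, and then an entrywise reading of that relation yields $|(s(k))_{11}|^2 = |(s(k))_{12}|^2 + (\text{something nonnegative, in fact another }|\cdot|^2)$, forcing $|r_1(k)| \le 1$, with strict inequality on the interior; the endpoint $k=0$ is handled by continuity plus the observation that equality would force $(s(0))_{11} = 0$, contradicting Assumption \ref{solitonlessassumption}. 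The $r_2$ case is the same computation applied to $s^A$.

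The main obstacle I anticipate is part $(iv)$: pinning down the exact symmetry relation satisfied by $\mathcal{U}$ and tracking it correctly through the two (coupled) Volterra equations to get the precise determinant/unitarity identity for $s(k)$ on $\mathbb{R}$. In the $2\times 2$ NLS case this is a one-line computation, but with $3\times 3$ matrices and the two distinct eigenfunctions $X$ and $X^A$ (adjoint problem) one has to be careful about which cofactor/minor identities are being used, and about the fact that $r_2$ is defined on the negative half-line via $s^A$ rather than $s$ — so the symmetry has to be compatible with the $k \mapsto$ (reflection) action on the sectors. I expect this is handled in \cite{CL} for the Boussinesq potential $\mathsf{U}$, so the real work is verifying that $\mathcal{U}$ in \eqref{mathsfUdef intro} obeys the same symmetry — which, given how the factors $(1-\omega)$, $(1-\omega^2)$ and the cyclic placement of $q_0, \overline{q_0}$ are arranged, it manifestly does — and then the whole of Lemma \ref{r1r2lemma} reduces to quoting \cite{CL} with the replacement $\mathsf{U} \to \mathcal{U}$. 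Everything else ($(i)$–$(iii)$) is routine once the Volterra lemma is in place.
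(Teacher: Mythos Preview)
Your treatment of parts $(i)$--$(iii)$ is essentially the same as the paper's: reduce to smoothness, regularity at $k=0$, and rapid decay of the entries of $s$ and $s^A$ (established exactly as in \cite{CL} with $\mathsf{U}\to\mathcal{U}$), then divide using Assumption~\ref{solitonlessassumption}.

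For part $(iv)$, however, there is a real gap. You plan to transport the argument from \cite{CL}, but in fact the Boussinesq reflection coefficients are \emph{not} bounded by $1$ in general (the paper flags this contrast explicitly), so there is nothing to transport. Your guessed mechanism, namely a unitarity-type identity $|s_{11}|^2 = |s_{12}|^2 + |\,\cdot\,|^2$, is also not what actually happens: the ``something'' on the right is not a modulus squared and is not manifestly nonnegative. The paper's argument runs as follows. From $s^A = (s^{-1})^T$ one has the cofactor identity $s^A_{11}s^A_{22} - s^A_{12}s^A_{21} = s_{33}$; the $\mathcal{B}$-symmetry $s^A(k)=\mathcal{B}\overline{s^A(\bar k)}\mathcal{B}$ turns the left side, for real $k$, into $|s^A_{11}(k)|^2 - |s^A_{12}(k)|^2$; and the $\mathcal{A}$-symmetry $s(k)=\mathcal{A}s(\omega k)\mathcal{A}^{-1}$ gives $s_{33}(k)=s_{11}(\omega^2 k)$. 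Hence
\[
1 - |r_2(k)|^2 \;=\; \frac{s_{11}(\omega^2 k)}{|s^A_{11}(k)|^2}, \qquad k\le 0.
\]
The right side is \emph{nonzero} for all $k\le 0$ by Assumption~\ref{solitonlessassumption} (since $\omega^2 k\in\bar D_1$), but positivity is not automatic: one concludes $1-|r_2|^2>0$ only by noting it is real, continuous, nonvanishing, and $\to 1$ as $k\to -\infty$. The $r_1$ case is analogous. So Assumption~\ref{solitonlessassumption} enters $(iv)$ in an essential way beyond merely ruling out $|r_j|=1$ at endpoints, and the strict inequality is a connectedness/continuity conclusion rather than a direct $|\cdot|^2$ decomposition.
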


We will show that the solution $q(x,t)$ of (\ref{3x3eq}) with initial data $q(x,0) = q_0(x)$ can be recovered from the solution $m(x,t,k)$ of a $3 \times 3$ RH problem whose jumps are expressed in terms of $r_1$ and $r_2$. To state the RH problem for $m$, let $\Gamma$ be the contour consisting of the three lines $\R \cup \omega \R \cup \omega^2 \R$ oriented away from the origin as in Figure \ref{Gamma.pdf}.
For $1 \leq i \neq j \leq 3$, define $\theta_{ij} = \theta_{ij}(x,t,k)$ by
$$\theta_{ij}(x,t,k) = (l_i - l_j)x + (z_i - z_j)t.$$
Define the jump matrix $v(x,t,k)$ for $k \in \Gamma$ by
\begin{align}\nonumber
&  v_1 = 
  \begin{pmatrix}  
 1 & - r_1(k)e^{-\theta_{21}} & 0 \\
  \overline{r_1(k)}e^{\theta_{21}} & 1 - |r_1(k)|^2 & 0 \\
  0 & 0 & 1
  \end{pmatrix},
&&  v_2 = 
  \begin{pmatrix}   
 1 & 0 & 0 \\
 0 & 1 - |r_2(\omega k)|^2 & -\overline{r_2(\omega k)}e^{-\theta_{32}} \\
 0 & r_2(\omega k)e^{\theta_{32}} & 1 
    \end{pmatrix},
   	\\ \nonumber
  &v_3 = 
  \begin{pmatrix} 
 1 - |r_1(\omega^2 k)|^2 & 0 & \overline{r_1(\omega^2 k)}e^{-\theta_{31}} \\
 0 & 1 & 0 \\
 -r_1(\omega^2 k)e^{\theta_{31}} & 0 & 1  
  \end{pmatrix}, &&  v_4 = 
  \begin{pmatrix}  
  1 - |r_2(k)|^2 & -\overline{r_2(k)} e^{-\theta_{21}} & 0 \\
  r_2(k)e^{\theta_{21}} & 1 & 0 \\
  0 & 0 & 1
   \end{pmatrix},
   	\\ \label{vdef}
&  v_5 = 
  \begin{pmatrix}
  1 & 0 & 0 \\
  0 & 1 & -r_1(\omega k)e^{-\theta_{32}} \\
  0 & \overline{r_1(\omega k)}e^{\theta_{32}} & 1 - |r_1(\omega k)|^2
  \end{pmatrix},
&& v_6 = 
  \begin{pmatrix} 
  1 & 0 & r_2(\omega^2 k)e^{-\theta_{31}} \\
  0 & 1 & 0 \\
  -\overline{r_2(\omega^2 k)}e^{\theta_{31}} & 0 & 1 - |r_2(\omega^2 k)|^2
   \end{pmatrix},
\end{align}
where $v_j$ denotes the restriction of $v$  to the part of $\Gamma$ labeled by $j$ in Figure \ref{Gamma.pdf}.

\begin{RHproblem}[RH problem for $m$]\label{RH problem for m} Given $r_1:(0,\infty) \to \C$ and $r_2:(-\infty,0) \to \C$, find a $3 \times 3$-matrix valued function $m(x,t,k)$ with the following properties:
\begin{enumerate}[$(a)$]
\item $m(x,t,\cdot) : \C\setminus \Gamma \to \C^{3 \times 3}$ is analytic.

\item The limits of $m(x,t,k)$ as $k$ approaches $\Gamma\setminus \{0\}$ from the left $(+)$ and right $(-)$ exist, are continuous on $\Gamma\setminus \{0\}$, and are related by
\begin{align}\label{mjumpcondition}
& m_{+}(x,t,k) = m_{-}(x,t,k)v(x,t,k), \qquad k \in \Gamma \setminus \{0\},
\end{align}
where $v$ is defined in terms of $r_{1}$ and $r_{2}$ by \eqref{vdef}.

\item $m(x,t,k) = I + O(k^{-1})$ as $k \to \infty$, $k \notin \Gamma$.

\item $m(x,t,k) = O(1)$ as $k \to 0$.
\end{enumerate}
\end{RHproblem}

Before stating our main result, we introduce the notion of a Schwartz class solution. 

\begin{definition}\label{def: Schwartz class solution}\upshape
We call $q(x,t)$ a {\it Schwartz class solution of \eqref{3x3eq} with existence time $T \in (0, \infty]$ and initial data $q_0 \in \mathcal{S}(\R)$} if
\begin{enumerate}[$(i)$] 
  \item $q$ is a smooth complex-valued function of $(x,t) \in \R \times [0,T)$.

\item $q$ satisfies \eqref{3x3eq} for $(x,t) \in \R \times [0,T)$ and 
$$q(x,0) = q_0(x), \qquad x \in \R.$$ 

  \item $q$ has rapid decay as $|x| \to \infty$ in the sense that, for each integer $N \geq 1$,
$$\sup_{\substack{x \in \R \\ t \in [0, T)}} \sum_{i =0}^N (1+|x|)^N \, |\partial_x^i q(x,t)| < \infty.$$
\end{enumerate} 
\end{definition}

We can now state our main theorem. The first part of the theorem establishes a representation for the solution $q(x,t)$ of (\ref{3x3eq}) in terms of the solution of a RH problem; the second and third parts establish formulas for the long-time asymptotics of $q(x,t)$ for $x/t$ in compact subsets of $(0,+\infty)$ and $(-\infty,0)$, respectively, see Figure \ref{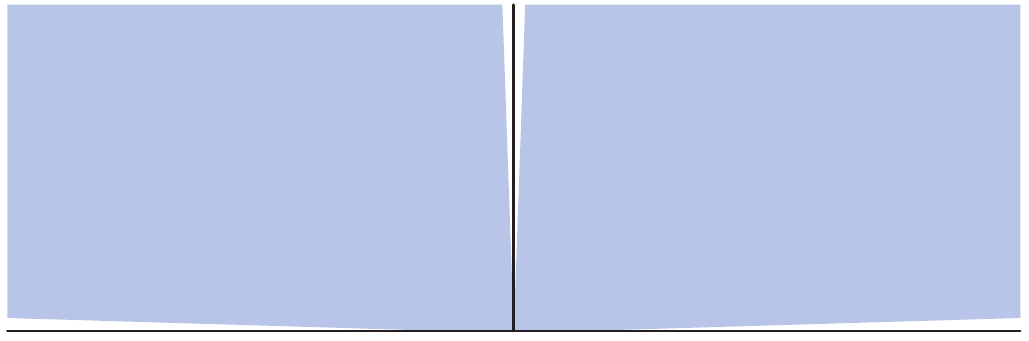}.

\begin{figure}
\bigskip
\begin{center}
 \begin{overpic}[width=.7\textwidth]{sectors.pdf}
  \put(101,0){\small $x$}
  \put(49.5,34.2){\small $t$}
   \end{overpic}
     \begin{figuretext}\label{sectors.pdf}
      The shaded areas represent the sectors in the $(x,t)$-plane in which the asymptotic formulas of Theorem \ref{RHth} apply. 
     \end{figuretext}
     \end{center}
\vspace{-.2cm}
\end{figure}

\begin{theorem}\label{RHth}
Suppose $q(x,t)$ is a Schwartz class solution of (\ref{3x3eq}) with existence time $T \in (0, \infty]$ and initial data $q_0 \in \mathcal{S}(\R)$ such that Assumption \ref{solitonlessassumption} holds. Define the spectral functions $r_1(k)$ and $r_2(k)$ in terms of $q_0$ by (\ref{3x3r1r2def}).

\begin{enumerate}[$(i)$]
\item The RH problem \ref{RH problem for m} has a unique solution $m(x,t,k)$ for each $(x,t) \in \R \times [0,T)$ and the formula
\begin{align}\label{qlim}
q(x,t) = \lim_{k\to \infty} \big(k \, m(x,t,k)\big)_{13}
\end{align}
expressing $q(x,t)$ in terms of $m(x,t,k)$ is valid for all $(x,t) \in \R \times [0,T)$.

\item As $t \to \infty$, we have
\begin{align*}
q(x,t) = &\; \frac{\sqrt{\pi} e^{-\frac{\pi \nu}{2}}}{3^{1/4} \sqrt{t}} 
\frac{e^{i\phi}}{r_1(k_0) \Gamma(i\nu)}
 + O(t^{-1}\ln t),
\end{align*}
uniformly for $\zeta=x/t$ in compact subsets of $(0,\infty)$, where $\Gamma$ denotes the Gamma function,
\begin{align*}
k_{0} = k_{0}(\zeta) = \frac{\zeta}{2}, \qquad \nu = \nu(\zeta) = -\frac{1}{2\pi}\ln(1-|r_{1}(k_{0})|^{2}) \geq 0,
\end{align*}
and
\begin{align*}
\phi = \phi(\zeta) = \frac{11\pi}{12}   + \nu \ln(6\sqrt{3}tk_0^2) - \sqrt{3} k_0^2 t
+ \frac{1}{\pi} \int_{k_{0}}^{+\infty} \ln\bigg|\frac{s-k_0}{s - \omega k_0}\bigg| d\ln(1-|r_1(s)|^{2}).
\end{align*}
\item As $t \to \infty$, we have
\begin{align*}
q(x,t) = &\; -\frac{\sqrt{\pi} e^{-\frac{\pi \tilde{\nu}}{2}}}{3^{1/4} \sqrt{t}} 
\frac{e^{i\tilde{\phi}}}{\tilde{r}_2(k_0) \Gamma(i\tilde{\nu})}
 + O(t^{-1}\ln t),
\end{align*}
uniformly for $\zeta=x/t$ in compact subsets of $(-\infty,0)$, where 
\begin{align*}
k_{0} = k_{0}(\zeta) = \frac{\zeta}{2}, \quad \tilde{\nu} = \tilde{\nu}(\zeta) = -\frac{1}{2\pi}\ln(1-|r_{2}(k_{0})|^{2}) \geq 0, \quad \tilde{r}_2(k_0) = \frac{(s^A(k_{0}))_{21}}{(s^A(k_{0}))_{11}},
\end{align*}
and
\begin{align*}
\tilde{\phi} = \tilde{\phi}(\zeta) = \frac{11\pi}{12}   + \tilde{\nu} \ln(6\sqrt{3}tk_0^2) - \sqrt{3} k_0^2 t
+ \frac{1}{\pi} \int_{-\infty}^{k_{0}} \ln\bigg|\frac{s-k_0}{s - \omega k_0}\bigg| d\ln(1-|r_2(s)|^{2}).
\end{align*}
\end{enumerate}
\end{theorem}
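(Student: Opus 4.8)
The plan is to follow the now-standard three-step scheme of the inverse scattering transform combined with a Deift--Zhou steepest descent analysis, exploiting throughout the structural similarity between the Lax pair of \eqref{3x3eq} and that of the ``good'' Boussinesq equation so as to import the bulk of the analysis from \cite{CL, CLW}. For part $(i)$, I would first carry out the direct scattering analysis (Section \ref{specsec}): using the Volterra equations \eqref{XXAdef intro} one constructs eigenfunctions of the $x$-part of the Lax pair analytic in the sectors $D_n$, shows that the scattering matrices $s(k)$, $s^A(k)$ are well-defined with the symmetries inherited from the $\mathbb{Z}_3$-structure of $J$ and from the involution $q \mapsto \bar q$, and uses Lemma \ref{r1r2lemma} to control $r_1,r_2$. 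One then introduces the $t$-dependence through the full Lax pair and defines $m(x,t,k)$ by a standard piecewise construction from the eigenfunctions (normalized solutions of the full Lax pair), checking that it satisfies RH problem \ref{RH problem for m} with jump \eqref{vdef}; uniqueness follows from a vanishing lemma, which in the $3\times 3$ setting rests on the Schwarz symmetry of $v$ together with $\det m \equiv 1$, exactly as in \cite{CL}. The residue-free behavior at $k=0$ and the recovery formula \eqref{qlim} come from expanding the eigenfunctions near $k = \infty$ and reading off the $(1,3)$ entry of the $O(k^{-1})$ coefficient, matching it to $q$ via the Lax pair; here one must check consistency between the PDE-evolution of $q$ and the RH-reconstructed quantity, i.e.\ that the reconstructed function indeed solves \eqref{3x3eq}. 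Since $q$ is assumed to be a Schwartz class solution, this is a compatibility check rather than a well-posedness argument.

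For parts $(ii)$ and $(iii)$, I would perform the steepest descent analysis of RH problem \ref{RH problem for m}. The phase functions are $\theta_{ij}(x,t,k) = (l_i-l_j)x + (z_i-z_j)t$; writing $\zeta = x/t$ one locates the stationary points of the relevant $\theta_{ij}$ on the six rays of $\Gamma$. For $\zeta > 0$ the analysis concentrates near $k_0 = \zeta/2$ on the positive real axis (and its $\mathbb{Z}_3$-images), where the phase $\theta_{21}$ has a stationary point of the type that produces a parabolic-cylinder (confluent hypergeometric) local model; the standard sequence of transformations is: (1) conjugate/deform using a scalar function $\delta(k)$ solving a scalar RH problem with jump $1-|r_1|^2$ on $(k_0,\infty)$ to turn the jump into one amenable to lower/upper triangular factorization, picking up the integral term in $\phi$ as $\arg\delta$ at $k_0$; (2) open lenses along the steepest-descent paths so that the jumps off a small cross at $k_0$ decay exponentially; (3) match a local parametrix built from parabolic cylinder functions to the global parametrix $\delta$; (4) estimate the small-norm error RH problem, getting $O(t^{-1}\ln t)$. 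Reading off $(k\,m)_{13}$ as $k\to\infty$ yields the leading term; the constants $\sqrt\pi e^{-\pi\nu/2}/(3^{1/4}\sqrt t)$, $e^{i\phi}$, $1/(r_1(k_0)\Gamma(i\nu))$ come directly from the parabolic-cylinder model together with the factors $6\sqrt3\,t k_0^2$ and $\sqrt3 k_0^2 t$ arising from the Taylor expansion of $\theta_{21}$ at $k_0$. For $\zeta<0$ the stationary point migrates to the negative real axis, where it is the jump $v_4$ (governed by $r_2$) that is active rather than $v_1$; the same model applies with $r_1 \leftrightarrow r_2$ and the reconstruction now sees the $(2,1)$-type entry of $s^A$, accounting for $\tilde r_2$ and the extra minus sign. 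The claim that ``the steepest descent analysis of \cite{CLW} applies almost unchanged'' is the key labor-saving observation: one needs only to verify that the mapping $q \mapsto \mathcal{U}$ in \eqref{mathsfUdef intro} together with the definitions \eqref{ljzjdef} of $l_j, z_j$ reproduces the Boussinesq phase structure up to relabeling, after which the contour deformations, parametrices, and error estimates transfer verbatim.

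I would organize the write-up as: first cite Lemma \ref{r1r2lemma} and the direct-scattering results to set up $m$; then prove $(i)$ via the piecewise eigenfunction construction and the vanishing/uniqueness lemma; then prove $(ii)$ in full detail, tracking the constants; then obtain $(iii)$ by the symmetry $r_1 \leftrightarrow r_2$ / $s \leftrightarrow s^A$ with only the differences spelled out.

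The main obstacle I anticipate is bookkeeping rather than conceptual: correctly tracking all the $\omega$-phases and the six-ray $\mathbb{Z}_3$-symmetric structure of $\Gamma$ so that exactly one stationary point is relevant in each regime, and pinning down the precise numerical constants ($\tfrac{11\pi}{12}$, $3^{1/4}$, $6\sqrt3$, the Gamma-function factor) from the parabolic-cylinder parametrix and the expansion of $\theta_{21}$ at $k_0$ — a place where sign and branch conventions for $\ln$, $\arg\delta$, and the square roots are easy to get wrong. A secondary subtlety is the behavior at $k=0$: one must make sure the local model at the stationary point and the condition $m = O(1)$ at the origin are compatible when $\zeta$ is allowed to range over a compact subset of $(0,\infty)$ (so $k_0$ stays bounded away from $0$), and that uniformity in $\zeta$ genuinely holds; this is where property $(ii)$ of Lemma \ref{r1r2lemma} (smoothness of $r_\ell$ up to $k=0$) and property $(iv)$ ($|r_\ell|<1$, so $\nu$ is finite) are used.
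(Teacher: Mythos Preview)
Your plan for parts $(i)$ and $(ii)$ matches the paper's approach: construct $m$ sector-by-sector from the Volterra/Fredholm eigenfunctions, read off the jump from the scattering relations, recover $q$ from the $(13)$ entry of the $O(k^{-1})$ coefficient, and for the asymptotics import the steepest descent analysis of \cite{CLW} (the paper literally quotes the leading-order formula for $\hat m$ from \cite[Section 6]{CLW} and then does only the final algebra to extract the constants in $\phi$). One small point: the paper obtains uniqueness simply from $\det v \equiv 1$ in the standard way, not via a vanishing lemma, since existence is already furnished by the explicit eigenfunction construction.

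Where you genuinely diverge is part $(iii)$. You propose redoing the steepest descent for $\zeta<0$, with the stationary point on $\R_-$ and the jump $v_4$ (hence $r_2$) taking over the role of $v_1$. The paper instead exploits the PDE symmetry $q(x,t)\mapsto -q(-x,t)$: setting $f_0(x)=-q_0(-x)$, one checks $\mathcal U(-x;f_0)=-\mathcal U(x;q_0)$ and $\mathcal L(-k)=-\mathcal L(k)$, hence $X(-x,-k;f_0)=Y(x,k;q_0)$ and $s(-k;f_0)=(s^A(k;q_0))^T$, so that $r_1(-k;f_0)=s^A_{21}(k;q_0)/s^A_{11}(k;q_0)=\tilde r_2(k;q_0)$ and $|r_1(-k;f_0)|=|r_2(k;q_0)|$. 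Applying the already-proved formula $(ii)$ to $f$ at $-\zeta>0$ and substituting back gives $(iii)$ in a few lines, with the minus sign and the appearance of $\tilde r_2$ coming for free from $q(x,t)=-f(-x,t)$ and the transpose relation. Your direct route would also work and is what the paper's Remark acknowledges as the alternative; its cost is repeating the entire deformation/parametrix bookkeeping on the other half-line, whereas the symmetry argument recycles $(ii)$ wholesale and makes the origin of $\tilde r_2=s^A_{21}/s^A_{11}$ (rather than $r_2=s^A_{12}/s^A_{11}$) transparent.
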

\begin{proof}
See Section \ref{proofsec}.
\end{proof}

\begin{remark}\upshape
Theorem \ref{RHth} provides the asymptotics of the solution $q(x,t)$ of  (\ref{3x3eq}) everywhere in the half-plane $t > 0$ except in narrow wedges containing the $x$ and $t$-axes, see Figure \ref{sectors.pdf}. 
The asymptotics in these wedges can also be analyzed via a steepest descent analysis of the RH problem for $m$, but this falls outside the scope of the present paper. As $\zeta = x/t \to 0$, the critical points in the steepest descent analysis merge at the origin, leading to qualitatively different asymptotics in the wedge containing the $t$-axis. Near the $x$-axis, the solution has decay consistent with the decay of the Schwartz class initial data.
\end{remark}

\begin{remark}\upshape
For many integrable PDEs with $2\times 2$ matrix Lax pairs, such as the KdV and mKdV equations, the reflection coefficient is bounded above by $1$ for all $k \in \R$. For these equations, this property is a direct consequence from the fact that the scattering matrix, which is of size $2 \times 2$, has unit determinant and satisfies a certain symmetry. For integrable PDEs with $3 \times 3$ matrix Lax pairs, the situation is considerably richer, because there is in general no simple argument that ensures an upper bound for the reflection coefficients. 
In Appendix \ref{section:appendix}, we explore this issue in the context of equation (\ref{3x3eq}).
Under Assumption \ref{solitonlessassumption}, we have shown in part $(iv)$ of Lemma \ref{r1r2lemma} that $|r_{1}(k)|<1$ for $k \geq 0$ and $|r_{2}(k)| <1$ for $k \leq 0$. We show in Appendix \ref{section:appendix} that these bounds on $r_1$ and $r_2$ hold even without Assumption \ref{solitonlessassumption} in the case of compactly supported initial data. (For general initial data $q_{0} \in \mathcal{S}(\mathbb{R})$, we establish  the slightly weaker bounds $|r_{1}(k)|\leq 1$ for $k \geq 0$ and $|r_{2}(k)|\leq 1$ for $k \leq 0$.) This is in contrast to other equations with $3 \times 3$ Lax pairs such as the ``good" Boussinesq equation, for which the absolute values of the reflection coefficients are, in general, not bounded above by $1$ unless further assumptions are imposed, see \cite{CL}. 
\end{remark}

\begin{remark}\upshape
Assumption \ref{solitonlessassumption} can be relaxed slightly in the case of compactly supported initial data. 
More precisely, it follows from the arguments in Appendix \ref{section:appendix} that $(s(k))_{11}$ and $(s^A(k))_{11}$ are automatically nonzero for $k \in \omega^{2}(-\infty,0] \subset \bar{D}_1$ and $k  \in \omega^{2}[0,\infty) \subset \bar{D}_4$, respectively, if $q_0$ has compact support. 
\end{remark}

\begin{remark}\upshape
The proof of the asymptotic formula valid for $\zeta = x/t > 0$ given in part $(ii)$ of Theorem \ref{RHth} relies on a steepest descent analysis of the RH problem for $m$. The asymptotics for $\zeta < 0$ can be derived in a similar way. However, for simplicity we choose to instead derive the asymptotics for $\zeta < 0$ from the asymptotics for $\zeta > 0$ and the fact that equation (\ref{3x3eq}) is invariant under the symmetry $q(x,t) \to -q(-x,t)$, see Section \ref{proofofiii}.
\end{remark}

\section{Direct scattering}\label{specsec}

\subsection{Lax pair}\label{Section: Lax pair}
Let $\omega := e^{2\pi i/3}$, define $\{l_j(k), z_j(k)\}_{j=1}^3$ by \eqref{ljzjdef}, and define $J$, $\mathcal{L} = \mathcal{L}(k)$, and $\mathcal{Z} = \mathcal{Z}(k)$ by
\begin{align*}
J = \mbox{diag}(\omega, \omega^{2}, 1), \qquad \mathcal{L} = k J = \mbox{diag}(l_{1},l_{2},l_{3}), \qquad \mathcal{Z} = k^{2} J^{2} = \mbox{diag}(z_{1},z_{2},z_{3}).
\end{align*}
Equation (\ref{3x3eq}) is the compatibility condition of the following Lax pair (see \cite{L3x3})
\begin{align}\label{Xlax}
\begin{cases}
X_x-[\mathcal{L},X]=\mathcal{U}X,\\
X_t-[\mathcal{Z},X]=\mathcal{V}X,
\end{cases}
\end{align}
where $k \in \C$  is the spectral parameter, $X(x,t,k)$ is a $3 \times 3$-matrix valued eigenfunction, and $\mathcal{U} = \mathcal{U}(x,t)$ and $\mathcal{V} = \mathcal{V}(x,t,k)$ are defined by
\begin{subequations} \label{expression for Ufrak}
\begin{align}
\mathcal{U} = &\; \begin{pmatrix} 0 & (1-\omega^2) \bar{q} & (1-\omega)q \\
(1-\omega)q & 0 & (1-\omega^2)\bar{q} \\
(1-\omega^2)\bar{q} & (1-\omega)q & 0 \end{pmatrix},
	\\ \nonumber
 \mathcal{V} = &\; k \begin{pmatrix} 
0 & (\omega^2-1) \bar{q} & (1-\omega^2)q \\
(\omega-1)q & 0 & (1-\omega)\bar{q} \\
(\omega-\omega^2)\bar{q} & (\omega^2-\omega)q & 0 \end{pmatrix}
	\\
& + (\bar{q}_x - 3q^2)\begin{pmatrix} 0 & \omega & 0 \\ 0 & 0 & \omega \\ \omega & 0 & 0 \end{pmatrix}
 + (q_x - 3\bar{q}^2)\begin{pmatrix} 0 & 0 & \omega^2 \\ \omega^2 & 0 & 0 \\ 0 & \omega^2 & 0 \end{pmatrix}. \end{align}
\end{subequations}
Indeed, a straightforward calculation shows that the compatibility condition of \eqref{Xlax}, i.e. $X_{xt}-X_{tx} = 0$, is equivalent to \eqref{3x3eq}. 
Assuming that $q(x,t)$ is a Schwartz class solution of (\ref{3x3eq}), we have
\begin{equation}
\lim_{x \to \pm \infty} \mathcal{U} = 0, \qquad \lim_{x \to \pm \infty} \mathcal{V} = 0.
\end{equation}
The Lax pair (\ref{Xlax}) possesses two convenient symmetries: If $F$ is any of the matrices $\mathcal{L}, \mathcal{Z}, \mathcal{U}, \mathcal{V}$, then
\begin{align}\label{Fsymm}
F(k) = \mathcal{A}F(\omega k) \mathcal{A}^{-1} = \mathcal{B}\overline{F(\overline{k})} \mathcal{B}, \qquad k \in \C,
\end{align}
where 
$$\mathcal{A} := \begin{pmatrix}
0 & 0 & 1 \\
1 & 0 & 0 \\
0 & 1 & 0
\end{pmatrix},
\qquad \mathcal{B} := \begin{pmatrix}
0 & 1 & 0 \\
1 & 0 & 0 \\
0 & 0 & 1
\end{pmatrix}.$$ 

\begin{remark}\upshape
The Lax pair (\ref{Xlax}) for equation \eqref{3x3eq} is similar to the Lax pair for the ``good'' Boussinesq equation analyzed in \cite{CL}. 
For example, the symmetries (\ref{Fsymm}) are the same as in \cite[Eqs. (3.18) and (3.19)]{CL}.
The main difference between (\ref{Xlax}) and the Lax pair analyzed in \cite{CL} is that whereas the matrices $\mathcal{U}$ and $\mathcal{V}$ in (\ref{Xlax}) are bounded at the origin, the analogs of $\mathcal{U}$ and $\mathcal{V}$ in \cite{CL} have double poles at the origin. There are also other, less essential, differences that we will indicate when necessary. Because of the evident similarities with \cite{CL}, we will omit some proofs in what follows.
\end{remark}

\subsection{The eigenfunctions $X$ and $Y$} 
In the remainder of this section, we will set $t=0$ and abuse notation by writing $\mathcal{U}(x)$ instead of $\mathcal{U}(x,0)$. We start by considering the $x$-part of \eqref{Xlax} evaluated at $t = 0$:
\begin{align}\label{xpart}
X_x - [\mathcal{L}, X] = \mathcal{U} X.
\end{align}
We define two $3 \times 3$-matrix valued solutions $X(x,k)$ and $Y(x,k)$ of (\ref{xpart}) as the solutions of the linear Volterra integral equations
\begin{subequations}\label{XYdef}
\begin{align}  \label{XYdefa}
 & X(x,k) = I - \int_x^{\infty} e^{(x-x')\widehat{\mathcal{L}(k)}} \big(\mathcal{U}(x')X(x',k) \big) dx',
  	\\ \label{XYdefb}
&  Y(x,k) = I + \int_{-\infty}^x e^{(x-x')\widehat{\mathcal{L}(k)}} \big(\mathcal{U}(x')Y(x',k)\big) dx'.
\end{align}
\end{subequations}
The six open subsets $\{D_n\}_1^6$ of Figure \ref{Gamma.pdf} can be written as
\begin{align*}
&D_1 = \{k \in \C\,|\, \re l_1 < \re l_2 < \re l_3\},
	& & D_2 = \{k \in \C\,|\, \re l_1 < \re l_3 < \re l_2\},
	\\
&D_3 = \{k \in \C\,|\, \re l_3 < \re l_1 < \re l_2\},
	& & D_4 = \{k \in \C\,|\, \re l_3 < \re l_2 < \re l_1\},
	\\
&D_5 = \{k \in \C\,|\, \re l_2 < \re l_3 < \re l_1\},
	& & D_6 = \{k \in \C\,|\, \re l_2 < \re l_1 < \re l_3\}.
\end{align*}
We let $\mathcal{S} := \{k \in \C \, |  \arg k \in (\frac{2\pi}{3}, \frac{4\pi}{3})\}$ denote the interior of $\bar{D}_3 \cup \bar{D}_4$. 

Proposition \ref{XYprop} below collects some basic properties of $X$ and $Y$. The statement is almost identical to \cite[Proposition 3.1]{CL} and we omit the proof; the only major difference is that in our case $X$ and $Y$ are well-defined at the origin.
\begin{proposition}\label{XYprop}
Suppose $q_0 \in \mathcal{S}(\R)$. 
Then the equations (\ref{XYdef}) uniquely define two $3 \times 3$-matrix valued solutions $X$ and $Y$ of (\ref{xpart}) with the following properties:
\begin{enumerate}[$(a)$]
\item The function $X(x, k)$ is defined for $x \in \R$ and $k \in (\omega^2 \bar{\mathcal{S}}, \omega \bar{\mathcal{S}}, \bar{\mathcal{S}})$, that is, the first column of $X(x,k)$ is defined for $x \in \R$ and $k \in \omega^2 \bar{\mathcal{S}}$, the second column of  $X(x,k)$ is defined for $x \in \R$ and $k \in \omega \bar{\mathcal{S}}$, etc. For each $k \in (\omega^2 \bar{\mathcal{S}}, \omega \bar{\mathcal{S}}, \bar{\mathcal{S}}) $, $X(\cdot, k)$ is smooth and satisfies (\ref{xpart}).

\item The function $Y(x, k)$ is defined for $x \in \R$ and $k \in (-\omega^2 \bar{\mathcal{S}}, -\omega \bar{\mathcal{S}}, -\bar{\mathcal{S}})$. For each $k \in (-\omega^2 \bar{\mathcal{S}}, -\omega \bar{\mathcal{S}}, -\bar{\mathcal{S}})$, $Y(\cdot, k)$ is smooth and satisfies (\ref{xpart}).

\item For each $x \in \R$, the function $X(x,\cdot)$ is continuous for $k \in (\omega^2 \bar{\mathcal{S}}, \omega \bar{\mathcal{S}}, \bar{\mathcal{S}})$ and analytic for $k \in (\omega^2 \mathcal{S}, \omega \mathcal{S}, \mathcal{S})$.

\item For each $x \in \R$, the function $Y(x,\cdot)$ is continuous for $k \in (-\omega^2 \bar{\mathcal{S}}, -\omega \bar{\mathcal{S}}, -\bar{\mathcal{S}})$ and analytic for $k \in (-\omega^2 \mathcal{S}, -\omega \mathcal{S}, -\mathcal{S})$.

\item For each $x \in \R$ and each $j = 1, 2, \dots$, the partial derivative $\frac{\partial^j X}{\partial k^j}(x, \cdot)$ has a continuous extension to $(\omega^2 \bar{\mathcal{S}}, \omega \bar{\mathcal{S}}, \bar{\mathcal{S}})$.

\item For each $x \in \R$ and each $j = 1, 2, \dots$, the partial derivative $\frac{\partial^j Y}{\partial k^j}(x, \cdot)$ has a continuous extension to $(-\omega^2 \bar{\mathcal{S}}, -\omega \bar{\mathcal{S}}, -\bar{\mathcal{S}})$.

\item For each $n \geq 1$, there are bounded smooth positive functions $f_+(x)$ and $f_-(x)$ of $x \in \R$ with rapid decay as $x \to +\infty$ and $x \to -\infty$, respectively, such that the following estimates hold for $x \in \R$ and $ j = 0, 1, \dots, n$:
\begin{subequations}\label{Xest}
\begin{align}\label{Xesta}
& \bigg|\frac{\partial^j}{\partial k^j}\big(X(x,k) - I\big) \bigg| \leq
f_+(x), \qquad k \in (\omega^2 \bar{\mathcal{S}}, \omega \bar{\mathcal{S}}, \bar{\mathcal{S}}),
	\\ \label{Xestb}
& \bigg|\frac{\partial^j}{\partial k^j}\big(Y(x,k) - I\big) \bigg| \leq
f_-(x), \qquad k \in (-\omega^2 \bar{\mathcal{S}}, -\omega \bar{\mathcal{S}}, -\bar{\mathcal{S}}).
\end{align}
\end{subequations}

\item $X$ and $Y$ obey the following symmetries for each $x \in \R$:
\begin{subequations}\label{XYsymm}
\begin{align}
&  X(x, k) = \mathcal{A} X(x,\omega k)\mathcal{A}^{-1} = \mathcal{B} \overline{X(x,\overline{k})}\mathcal{B}, 
\qquad k \in (\omega^2 \bar{\mathcal{S}}, \omega \bar{\mathcal{S}}, \bar{\mathcal{S}}),
	\\
&  Y(x, k) = \mathcal{A} Y(x,\omega k)\mathcal{A}^{-1} = \mathcal{B} \overline{Y(x,\overline{k})}\mathcal{B}, 
\qquad k \in (-\omega^2 \bar{\mathcal{S}}, -\omega \bar{\mathcal{S}}, -\bar{\mathcal{S}}).
\end{align}
\end{subequations}
\item If $q_0(x)$ has compact support, then, for each  $x \in \R$,  $X(x, k)$ and $Y(x, k)$ are defined and analytic for $k \in \C$ and $\det X = \det Y = 1$.
\end{enumerate}
\end{proposition}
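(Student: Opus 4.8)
The plan is to analyze \eqref{XYdef} as a pair of decoupled Volterra integral equations — one for each column, since $(\mathcal{U}X)_{ij}$ and $[\mathcal{L},X]_{ij}=(l_i-l_j)X_{ij}$ involve only the $j$th column of $X$ — and to run the standard iteration argument, essentially reproducing \cite[Proposition 3.1]{CL}; the only simplification is that at $t=0$ the matrix $\mathcal{U}$ is regular at $k=0$, so no special treatment of the origin is needed. The entry-wise identity $\big(e^{(x-x')\widehat{\mathcal{L}(k)}}B\big)_{ij}=e^{(x-x')(l_i(k)-l_j(k))}B_{ij}$ shows that the $j$th column equation in \eqref{XYdefa}, where $x'\geq x$, has a kernel of modulus $\leq 1$ precisely when $\re(l_i(k)-l_j(k))\geq 0$ for all $i$, i.e. when $\re l_j(k)=\min_i\re l_i(k)$. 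Since $l_j(k)=\omega^j k$ and $\bar{\mathcal{S}}$ is exactly the set on which $\re l_3=\re k$ is minimal, this means $\omega^j k\in\bar{\mathcal{S}}$, i.e. $k\in\omega^{-j}\bar{\mathcal{S}}$, which is $\omega^2\bar{\mathcal{S}},\omega\bar{\mathcal{S}},\bar{\mathcal{S}}$ for $j=1,2,3$ — the domains in $(a)$. The corresponding computation with $x'\leq x$ for $Y$ reverses the inequality (maximal real part) and yields the sets in $(b)$.

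Fix such a $k$ and write the $j$th column equation as a fixed point equation $\Phi=e_j+\mathcal{K}_j\Phi$ with $\mathcal{K}_j$ the Volterra operator, $e_j$ the $j$th standard basis vector. The ordered-integral structure gives $\|\mathcal{K}_j^n e_j\|\leq\frac{1}{n!}\big(\int_x^\infty\|\mathcal{U}(x')\|\,dx'\big)^n$, so the Neumann series converges absolutely and locally uniformly in $(x,k)$; this establishes existence and uniqueness of $X$ (and $Y$), and summing the series gives $\|X(x,k)-I\|\leq\exp\!\big(\int_x^\infty\|\mathcal{U}\|\big)-1=:f_+(x)$, a smooth positive function with rapid decay as $x\to+\infty$ because $\mathcal{U}$ is built from $q_0\in\mathcal{S}(\R)$ — this is $(g)$ for the undifferentiated function, with $Y$ handled symmetrically using $\int_{-\infty}^x$. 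Differentiating the integral equation in $x$ shows $X(\cdot,k)\in C^\infty$ and solves \eqref{xpart} (giving $(a)$–$(b)$); on the interior of each sector the kernel is analytic in $k$ and the series converges locally uniformly, so $X(x,\cdot)$ is analytic there and continuous up to the closed sector, which is $(c)$–$(d)$. For the $k$-derivatives, differentiating \eqref{XYdefa} $j$ times produces a Volterra equation for $\partial_k^j X$ whose forcing term involves $\partial_k^i X$ with $i<j$ multiplied by polynomial factors $(x-x')^p$ coming from $\partial_k^p e^{(x-x')\widehat{\mathcal{L}}}$ (note $l_i'(k)=\omega^i$); since $(x-x')^p\mathcal{U}(x')$ is still integrable on $[x,\infty)$ while $e^{(x-x')(l_i-l_j)}$ stays bounded on the relevant closed sector, induction on $j$ gives the continuous extensions of $\partial_k^j X$ and $\partial_k^j Y$ claimed in $(e)$–$(f)$, and enlarging $f_\pm$ if necessary yields the derivative bounds in $(g)$.

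Part $(h)$ follows from uniqueness: since $\mathcal{U}$ is $k$-independent at $t=0$, \eqref{Fsymm} gives $\mathcal{U}=\mathcal{A}\mathcal{U}\mathcal{A}^{-1}=\mathcal{B}\overline{\mathcal{U}}\mathcal{B}$, while $\mathcal{L}(k)=\mathcal{A}\mathcal{L}(\omega k)\mathcal{A}^{-1}=\mathcal{B}\overline{\mathcal{L}(\bar k)}\mathcal{B}$; substituting these into \eqref{XYdefa} shows that $\mathcal{A}X(x,\omega k)\mathcal{A}^{-1}$ and $\mathcal{B}\overline{X(x,\bar k)}\mathcal{B}$ each satisfy the very same Volterra equation as $X(x,k)$ on the same domain (the maps $k\mapsto\omega k$ and $k\mapsto\bar k$ permute the three sectors consistently with the column structure), so the three functions coincide, and similarly for $Y$, which is \eqref{XYsymm}. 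For $(i)$, if $q_0$ is compactly supported then $\mathcal{U}$ vanishes outside a compact set, the integrals in \eqref{XYdef} run over a bounded $x'$-interval, the kernel is bounded for every $k\in\C$ with no sign condition, and the Neumann series converges locally uniformly in $k\in\C$, so $X(x,\cdot)$ and $Y(x,\cdot)$ are entire. For the determinant, set $\psi(x,k)=X(x,k)e^{x\mathcal{L}(k)}$; then \eqref{xpart} becomes $\psi_x=(\mathcal{L}+\mathcal{U})\psi$, so Abel's formula gives $\partial_x\det\psi=\tr(\mathcal{L}+\mathcal{U})\det\psi=0$ since $\tr\mathcal{L}=k(\omega+\omega^2+1)=0$ and $\tr\mathcal{U}=0$; as $\det e^{x\mathcal{L}}=e^{x\tr\mathcal{L}}=1$ we have $\det X=\det\psi$, which is $x$-independent and tends to $1$ as $x\to+\infty$ (where $X\to I$), so $\det X\equiv 1$, and the same argument with $x\to-\infty$ gives $\det Y\equiv 1$.

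The only genuine effort is the bookkeeping in $(a)$–$(b)$ — matching the ``minimal/maximal real part'' conditions on $l_1,l_2,l_3$ to the six sectors and their $\omega$-rotations — together with the polynomial-weight estimates in $(e)$–$(f)$; both are routine given the Schwartz decay of $q_0$, and, in contrast to \cite{CL}, the absence of a pole of $\mathcal{U}$ at $k=0$ removes the one delicate point, which is why the proposition can simply be quoted from there with the indicated modification.
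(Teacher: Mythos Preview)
Your proposal is correct and follows exactly the approach the paper intends: the paper omits the proof and refers to \cite[Proposition 3.1]{CL}, noting only that the regularity of $\mathcal{U}$ at $k=0$ removes the one delicate point there, and your sketch reproduces precisely that standard Volterra iteration argument (column-wise decoupling, Neumann series with the $1/n!$ bound, identification of the sectors via the minimal/maximal real part of $l_j$, differentiation in $k$ with polynomial weights absorbed by Schwartz decay, the symmetries from uniqueness, and Abel's formula for the determinant).
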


\subsection{Asymptotics of $X$ and $Y$ as $k \to \infty$}
Following \cite[Section 3.3]{CL}, we first consider the following formal series
\begin{align*}
& X_{formal}(x,k) = I + \frac{X_1(x)}{k} + \frac{X_2(x)}{k^2} + \cdots,
	\\ \nonumber
& Y_{formal}(x,k) =  I + \frac{Y_1(x)}{k} + \frac{Y_2(x)}{k^2} + \cdots,
\end{align*}
normalized at $x = \infty$ and $x = -\infty$, respectively:
\begin{align}\label{Fjnormalization}
\lim_{x\to \infty} X_j(x) = \lim_{x\to -\infty} Y_j(x) = 0, \qquad j \geq 1.
\end{align}
Substituting $X_{formal}$ into (\ref{xpart}), we obtain the following recurrence relations for the coefficients $\{X_{j}\}_{j=1}^{+\infty}$ by identifying terms of the same order:
\begin{align}\label{xrecursive}
\begin{cases}
[J, X_{j+1}] = \partial_x X_{j}^{(o)} - (\mathcal{U}X_{j})^{(o)},
	\\
\partial_x X_{j}^{(d)} = (\mathcal{U}X_{j})^{(d)},
\end{cases}  \qquad j \geq 0,
\end{align}
where $X_{0} = I$, and $A^{(d)}$ and $A^{(o)}$ denote the diagonal and off-diagonal parts of a $3 \times 3$ matrix $A$, respectively. 
The first coefficients $X_{1}$ and $Y_{1}$ are given by
\begin{subequations}\label{X1Y1explicit}
\begin{align}
X_1(x) = & \; \begin{pmatrix}
0 & \omega \bar{q}_{0} & q_{0} \\
\omega^{2} q_{0} & 0 & \bar{q}_{0} \\
\omega^{2} \bar{q}_{0} & \omega q_{0} & 0
\end{pmatrix} - 3 \int_{\infty}^{x} |q_{0}|^{2} dx' \begin{pmatrix} \omega^2 & 0 & 0 \\ 
0 & \omega & 0 \\ 
0 & 0 & 1
\end{pmatrix},
	\\ 
Y_1(x) = & \begin{pmatrix}
0 & \omega \bar{q}_{0} & q_{0} \\
\omega^{2} q_{0} & 0 & \bar{q}_{0} \\
\omega^{2} \bar{q}_{0} & \omega q_{0} & 0
\end{pmatrix} - 3 \int_{-\infty}^{x} |q_{0}|^{2} dx' \begin{pmatrix} \omega^2 & 0 & 0 \\ 
0 & \omega & 0 \\ 
0 & 0 & 1
\end{pmatrix}.
\end{align}
\end{subequations}

Propositions \ref{XYprop2} shows that $X$ and $Y$ coincide with $X_{formal}$ and $Y_{formal}$ to all orders as $k \to \infty$. We omit the proof which is very similar to the proof of \cite[Proposition 3.2]{CL}. 

\begin{proposition}\label{XYprop2}
Suppose $q_0 \in \mathcal{S}(\R)$. 
As $k \to \infty$, $X$ and $Y$ coincide to all orders with $X_{formal}$ and $Y_{formal}$, respectively. More precisely, let $p \geq 0$ be an integer. Then the functions
\begin{align}\label{Xpdef}
&X_{(p)}(x,k) := I + \frac{X_1(x)}{k} + \cdots + \frac{X_{p}(x)}{k^{p}},
	\\ \nonumber
&Y_{(p)}(x,k) := I + \frac{Y_1(x)}{k} + \cdots + \frac{Y_{p}(x)}{k^{p}},
\end{align}
are well-defined and, for each integer $j \geq 0$,
\begin{subequations}\label{Xasymptotics}
\begin{align}\label{Xasymptoticsa}
& \bigg|\frac{\partial^j}{\partial k^j}\big(X - X_{(p)}\big) \bigg| \leq
\frac{f_+(x)}{|k|^{p+1}}, \qquad x \in \R, \  k \in (\omega^2 \bar{\mathcal{S}}, \omega \bar{\mathcal{S}}, \bar{\mathcal{S}}), \ |k| \geq 2,
	\\ \label{Xasymptoticsb}
& \bigg|\frac{\partial^j}{\partial k^j}\big(Y - Y_{(p)}\big) \bigg| \leq
\frac{f_-(x)}{|k|^{p+1}}, \qquad x \in \R, \  k \in (-\omega^2 \bar{\mathcal{S}}, -\omega \bar{\mathcal{S}}, -\bar{\mathcal{S}}), \ |k| \geq 2,
\end{align}
\end{subequations}
where $f_+(x)$ and $f_-(x)$ are bounded smooth positive functions of $x \in \R$ with rapid decay as $x \to +\infty$ and $x \to -\infty$, respectively.
\end{proposition}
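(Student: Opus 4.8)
The plan is to follow the scheme used for the structurally identical Lax pair in \cite[Proposition 3.2]{CL}: control the formal coefficients $X_j,Y_j$, compute the defect of the truncated series in the $x$-equation \eqref{xpart}, and then convert the equation for the error into a Volterra integral equation whose inhomogeneous term is made $O(k^{-(p+1)})$ by a single integration by parts.

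First I would verify by induction on $j$ that the recurrence \eqref{xrecursive} together with the normalization \eqref{Fjnormalization} uniquely determines smooth matrix-valued functions $\{X_j\}$, $\{Y_j\}$ that are bounded on $\R$, with $X_j$ rapidly decaying as $x\to+\infty$ and $Y_j$ rapidly decaying as $x\to-\infty$. Indeed, $X_{j+1}^{(o)}$ is obtained by applying the inverse of the map $B\mapsto[J,B]$ — which is well-defined on off-diagonal matrices since the diagonal entries of $J$ are distinct — to the Schwartz function $\partial_x X_j^{(o)}-(\mathcal{U}X_j)^{(o)}$, while $X_j^{(d)}$ is obtained by integrating the Schwartz function $(\mathcal{U}X_j)^{(d)}$ from $+\infty$, which preserves boundedness on $\R$ and rapid decay as $x\to+\infty$; the case of $Y_j$ is symmetric. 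This makes $X_{(p)},Y_{(p)}$ (and in particular $X_{p+1},Y_{p+1}$) well-defined.

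Next, substituting $X_{(p)}=I+\sum_{j=1}^p X_j k^{-j}$ into \eqref{xpart} and using \eqref{xrecursive}, the coefficients of $k^0,k^{-1},\dots,k^{-(p-1)}$ all cancel, leaving
\[
\partial_x X_{(p)}-[\mathcal{L},X_{(p)}]-\mathcal{U}X_{(p)} = k^{-p}\,[J,X_{p+1}(x)] =: k^{-p}R_p(x),
\]
where $R_p$ is off-diagonal and Schwartz. Since $X$ solves \eqref{xpart} exactly and both $X$ and $X_{(p)}$ tend to $I$ as $x\to+\infty$ (by Proposition \ref{XYprop}$(g)$ and \eqref{Fjnormalization}), the error $W_p:=X-X_{(p)}$ satisfies $\partial_x W_p-[\mathcal{L},W_p]=\mathcal{U}W_p-k^{-p}R_p$ with $W_p\to 0$ as $x\to+\infty$, hence (by the same Volterra theory as for $X$ itself) the integral equation
\[
W_p(x,k) = -\int_x^{\infty} e^{(x-x')\widehat{\mathcal{L}(k)}}\Big(\mathcal{U}(x')W_p(x',k)-k^{-p}R_p(x')\Big)\,dx'.
\]
The inhomogeneous term is $k^{-p}\int_x^\infty e^{(x-x')\widehat{\mathcal{L}(k)}}R_p(x')\,dx'$; since $R_p$ is off-diagonal, each of its nonzero entries carries a factor $e^{(x-x')(l_i-l_j)}$ with $l_i-l_j=(\omega^i-\omega^j)k$, and these exponentials are bounded for $x'\ge x$ in the relevant sectors (this is precisely the content of the sector restrictions in Proposition \ref{XYprop}). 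A single integration by parts, using $R_p(x')\to0$ as $x'\to\infty$, produces a factor $(l_i-l_j)^{-1}=O(k^{-1})$, so the inhomogeneous term is bounded by $f_+(x)/|k|^{p+1}$ for $|k|\ge2$ with $f_+$ bounded, smooth, positive and rapidly decaying as $x\to+\infty$. Since the operator $W\mapsto-\int_x^\infty e^{(x-x')\widehat{\mathcal{L}(k)}}(\mathcal{U}(x')W(x'))\,dx'$ is the same Volterra operator whose uniform-in-$k$ resolvent bound (over $\bar{\mathcal S}$ and its rotations by $\omega,\omega^2$) underlies Proposition \ref{XYprop}, applying $(I-K)^{-1}$ gives \eqref{Xasymptoticsa} for $j=0$. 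For the $k$-derivatives I would differentiate the Volterra equation $j$ times and induct on $j$: derivatives falling on $e^{(x-x')\widehat{\mathcal{L}(k)}}$ generate polynomial factors in $x-x'$ that are absorbed by the rapid decay of $\mathcal{U}$ and $R_p$ in $x'$, derivatives falling on $k^{-p}$ only improve the power of $|k|^{-1}$, and the same resolvent bound closes the induction. The estimates for $Y$ follow identically with $\int_x^\infty$ replaced by $\int_{-\infty}^x$ and $f_+$ by $f_-$.

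The argument is conceptually routine; the only point requiring care is to run the integration-by-parts step and the ensuing Neumann-series estimate so that the bound is simultaneously uniform in $k$ over the relevant sectors, carries the correct rapid-decay profile $f_\pm(x)$ in $x$, and persists under differentiation in $k$. All of this has already been carried out for the structurally identical Lax pair in \cite{CL}, and the only difference here — the absence of a double pole of $\mathcal{U}$ at $k=0$, so that everything is regular at the origin — makes the present situation strictly easier, so the proof transfers with purely cosmetic changes.
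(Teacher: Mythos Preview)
Your proposal is correct and follows essentially the same approach as the paper, which omits the proof and simply refers to \cite[Proposition 3.2]{CL}; the scheme you outline---inductive control of the $X_j$ via the recurrence \eqref{xrecursive}, computation of the defect $\partial_x X_{(p)}-[\mathcal{L},X_{(p)}]-\mathcal{U}X_{(p)}=k^{-p}[J,X_{p+1}]$, Volterra equation for the error, integration by parts to gain the extra $|k|^{-1}$, and uniform resolvent bounds with induction on the $k$-derivatives---is precisely the argument of \cite{CL}, and your observation that the regularity of $\mathcal{U}$ at $k=0$ only simplifies matters is exactly the point the paper makes elsewhere.
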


\subsection{The scattering matrix}
The scattering matrix $s(k)$ was defined in (\ref{sdef}).
Our next proposition, whose proof is omitted, establishes several properties of $s(k)$. Properties $(a)$-$(e)$ and $(g)$ are very similar (though not identical) to their counterparts in \cite[Proposition 3.5]{CL}. The behavior of $s(k)$ as $k \to 0$ is stated in item $(f)$; this item differs significantly from the corresponding item in \cite[Proposition 3.5]{CL} but can be easily proved using Proposition \ref{XYprop}.

\begin{proposition}\label{sprop}
Let $\R_+ = (0,+\infty)$ and $\R_- = (-\infty, 0)$. Suppose $q_0 \in \mathcal{S}(\R)$. Then the scattering matrix $s(k)$ defined in (\ref{sdef}) has the following properties:
\begin{enumerate}[$(a)$]
\item The entries of $s(k)$ are defined and continuous for $k$ in
\begin{align}\label{sdomainofdefinition}
 \begin{pmatrix}
 \omega^2 \bar{\mathcal{S}} & \R_+ & \omega \R_+ \\
 \R_+ & \omega \bar{\mathcal{S}} & \omega^2 \R_+ \\
 \omega \R_+ & \omega^2 \R_+ & \bar{\mathcal{S}}
 \end{pmatrix},
\end{align}
that is, the $(11)$ entry of $s(k)$ is defined and continuous for $k \in \omega^2 \bar{\mathcal{S}}$, etc. 
 
\item The diagonal entries of $s(k)$  are analytic in the interior of their domains of definition as given in (\ref{sdomainofdefinition}). 
 
\item For $j = 1, 2, \dots$, the derivative $\partial_k^js(k)$ is well-defined and continuous for $k$ in (\ref{sdomainofdefinition}).

\item $s(k)$ obeys the symmetries
\begin{align}\label{symmetry of s}
&  s(k) = \mathcal{A} s(\omega k)\mathcal{A}^{-1} = \mathcal{B} \overline{s(\overline{k})}\mathcal{B}.
\end{align}

\item $s(k)$ approaches the identity matrix as $k \to \infty$. More precisely, there are diagonal matrices $\{s_j\}_1^\infty$ such that
\begin{align*}\nonumber
\Big|\partial_k^j \Big(s(k) - I - \sum_{j=1}^N \frac{s_j}{k^j}\Big)\Big| = O(k^{-N-1}), \qquad k \to \infty,
\ k \in \begin{pmatrix}
 \omega^2 \bar{\mathcal{S}} & \R_+ & \omega \R_+ \\
 \R_+ & \omega \bar{\mathcal{S}} & \omega^2 \R_+ \\
 \omega \R_+ & \omega^2 \R_+ & \bar{\mathcal{S}}
 \end{pmatrix},
\end{align*}
for $j = 0, 1, \dots, N$ and each integer $N \geq 1$. In particular, the off-diagonal entries of $s(k)$ have rapid decay as $k \to \infty$.

\item We have
\begin{align}\label{s at 0}
s(k) = s^{(0)} + s^{(1)}k + \ldots, \qquad k \to 0, \; k \in \begin{pmatrix}
 \omega^2 \bar{\mathcal{S}} & \R_+ & \omega \R_+ \\
 \R_+ & \omega \bar{\mathcal{S}} & \omega^2 \R_+ \\
 \omega \R_+ & \omega^2 \R_+ & \bar{\mathcal{S}}
 \end{pmatrix},
\end{align}
and the expansion can be differentiated termwise any number of times.
\item If $q_{0}(x)$ has compact support, then $s(k)$ is defined and analytic for $k \in \C$, $\det s = 1$ for $k \in \C$, and
\begin{align}\label{XYs} 
X(x,k) = Y(x,k)e^{x\widehat{\mathcal{L}(k)}} s(k), \qquad k \in \C.
\end{align}
\end{enumerate}
\end{proposition}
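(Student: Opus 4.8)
The plan is to extract every item of the proposition directly from the integral representation \eqref{sdef} of $s(k)$, combined with the properties of the eigenfunction $X$ collected in Propositions \ref{XYprop} and \ref{XYprop2}, proceeding essentially as in the proof of \cite[Proposition 3.5]{CL}; the only item for which the argument genuinely differs is $(f)$, and there the difference is that the boundedness of $\mathcal{U}$ at the origin makes the analysis easier, not harder. For part $(a)$ I write the $(ij)$-entry of \eqref{sdef} as $s(k)_{ij} = \delta_{ij} - \int_{\R} e^{-x(l_i(k)-l_j(k))}\,(\mathcal{U}(x)X(x,k))_{ij}\,dx$. Only the $j$-th column of $X(\cdot,k)$ occurs here, which confines $k$ to the sector $\omega^{3-j}\bar{\mathcal{S}}$ of Proposition \ref{XYprop}$(a)$; at the same time $\mathcal{U}(x)X(x,k)$ is bounded and rapidly decreasing in $x$ by the decay of $\mathcal{U}$ together with \eqref{Xest}, while $l_i(k)-l_j(k)=(\omega^i-\omega^j)k$ has zero real part only along one line through the origin, so absolute convergence of the integral forces $k$ onto that line. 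Intersecting these two constraints entry by entry reproduces precisely the array \eqref{sdomainofdefinition}, and continuity on it follows by dominated convergence. I expect this entrywise bookkeeping — matching the column-dependent domain of $X$ against the ray on which the relevant exponential stays bounded — to be the step most prone to slips, although it is entirely mechanical once the displayed formula is in hand.

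Parts $(b)$, $(c)$ and $(d)$ are then short. The diagonal entries $s(k)_{ii} = 1 - \int_{\R}(\mathcal{U}X)_{ii}(x,k)\,dx$ carry no exponential, so $(b)$ follows by differentiating under the integral sign from the analyticity of the $i$-th column of $X$ (Proposition \ref{XYprop}$(c)$), using \eqref{Xest} as a dominating bound. Part $(c)$ is the same argument applied to arbitrarily many $k$-derivatives, justified by the continuous extensions of $\partial_k^j X$ in Proposition \ref{XYprop}$(e)$. For $(d)$ I insert the symmetries $\mathcal{L}(k)=\mathcal{A}\mathcal{L}(\omega k)\mathcal{A}^{-1}=\mathcal{B}\overline{\mathcal{L}(\bar k)}\mathcal{B}$ and $\mathcal{U}=\mathcal{A}\mathcal{U}\mathcal{A}^{-1}=\mathcal{B}\overline{\mathcal{U}}\mathcal{B}$ from \eqref{Fsymm}, together with \eqref{XYsymm}, into \eqref{sdef}; since conjugation by $\mathcal{A}$ (respectively $\mathcal{B}$, together with complex conjugation) commutes with the operation $B\mapsto e^{-x\widehat{\mathcal{L}}}(\mathcal{U}B)$, the integrand transforms covariantly and \eqref{symmetry of s} follows.

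For $(e)$ I substitute $X = X_{(p)}(x,k) + O(|k|^{-p-1})$ from Proposition \ref{XYprop2} (with the estimate \eqref{Xasymptotics} controlling the remainder and its $k$-derivatives) into \eqref{sdef}; the diagonal part produces the claimed series $I+\sum_{j\ge 1}s_j k^{-j}$ with diagonal coefficients $s_j$ built from the $X_j$. The rapid decay of the off-diagonal entries is in fact immediate from a separate observation: on the ray where $s(k)_{ij}$ ($i\neq j$) lives, $l_i(k)-l_j(k)$ is purely imaginary, so $s(k)_{ij}$ is, up to a sign, the Fourier transform of the Schwartz function $x\mapsto(\mathcal{U}X)_{ij}(x,k)$ evaluated at a real multiple of $k$, and its Schwartz seminorms are bounded uniformly in $k$ by \eqref{Xest} and the equation \eqref{xpart}; hence $s(k)_{ij}$ and all its $k$-derivatives decay faster than any power of $k$. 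For $(f)$, Proposition \ref{XYprop}$(e)$ shows $X(x,k)$ has a termwise-differentiable Taylor expansion in $k$ at $k=0$ whose coefficients become Schwartz in $x$ after multiplication by $\mathcal{U}(x)$; since $e^{-x\widehat{\mathcal{L}(k)}}$ is entire in $k$ with polynomial-in-$x$ matrix coefficients, one expands and integrates term by term in \eqref{sdef} to get \eqref{s at 0}. Here the fact that $\mathcal{U}$ is bounded at the origin — unlike the corresponding matrix in \cite{CL} — is exactly what makes $k=0$ an unremarkable point.

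Finally, for $(g)$, suppose $q_0$ is supported in $[-R,R]$, so that $X$ and $Y$ are entire in $k$ with $\det X=\det Y=1$ by Proposition \ref{XYprop}$(i)$. A direct differentiation gives $\frac{d}{dx}\big(e^{-x\widehat{\mathcal{L}}}X\big)=e^{-x\widehat{\mathcal{L}}}(\mathcal{U}X)$, and integrating from $x$ to $+\infty$ (using $X(x,k)\to I$ as $x\to+\infty$ and $e^{-x\widehat{\mathcal{L}}}I = I$) yields $e^{-x\widehat{\mathcal{L}}}X(x,k)=I-\int_x^{\infty}e^{-x'\widehat{\mathcal{L}}}(\mathcal{U}X)\,dx'$. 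For $x<-R$ the integrand is supported in $(x,\infty)$, so the right-hand side equals $I-\int_{\R}e^{-x'\widehat{\mathcal{L}}}(\mathcal{U}X)\,dx' = s(k)$ by \eqref{sdef}; hence $X(x,k)=e^{x\widehat{\mathcal{L}}}s(k)$ there, and since $Y(x,k)\equiv I$ for $x<-R$ this reads $X=Ye^{x\widehat{\mathcal{L}}}s$ on $(-\infty,-R)$. Both sides of \eqref{XYs} solve \eqref{xpart}, so uniqueness of solutions propagates the identity to all $x\in\R$; taking determinants gives $\det s=1$, and analyticity of $s$ on $\C$ follows from $s(k)=e^{-x\widehat{\mathcal{L}(k)}}X(x,k)$ evaluated at any fixed $x<-R$ with $X(x,\cdot)$ entire.
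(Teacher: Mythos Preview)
Your proposal is correct and follows exactly the approach the paper indicates: parts $(a)$--$(e)$ and $(g)$ are handled by the same entrywise analysis of \eqref{sdef} (domains from the columns of $X$, convergence from the exponential factors, symmetries inherited from \eqref{Fsymm} and \eqref{XYsymm}, large-$k$ behavior from Proposition \ref{XYprop2}) as in \cite[Proposition~3.5]{CL}, while $(f)$ is obtained directly from the smoothness of $X$ at $k=0$ (Proposition \ref{XYprop}$(e)$) and the boundedness of $\mathcal{U}$ there --- precisely the simplification the paper points out.
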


\subsection{Analysis of $X^A$, $Y^A$ and $s^{A}$} Since the Lax pair associated to \eqref{3x3eq} is third order, the columns of $X$ and $Y$ alone are not sufficient to define a RH problem. Following \cite{L3x3}, we need to consider $X^{A}$ and $Y^{A}$ defined as the solutions of the following Volterra equations:
\begin{subequations}\label{XAYAdef}
\begin{align}
& X^{A}(x,k) = I + \int_{x}^{\infty}e^{-(x-x')\widehat{\mathcal{L}(k)}} \big(\mathcal{U}(x')^{T}X^{A}(x',k)\big)dx',  \\
& Y^{A}(x,k) = I - \int_{-\infty}^{x}e^{-(x-x')\widehat{\mathcal{L}(k)}} \big(\mathcal{U}(x')^{T}Y^{A}(x',k)\big)dx', \end{align}
\end{subequations}
and the associated spectral function $s^{A}(k)$ defined in (\ref{sAdef}). If $q_{0}$ is compactly supported, all entries of $X,X^{A},Y,Y^{A},s,s^{A}$ are well defined for all $k \in \mathbb{C}$, and $X^{A}$, $Y^{A}$, and $s^{A}$ are simply the inverse transpose of $X$, $Y$ and $s$, respectively. For general $q_{0} \in \mathcal{S}(\mathbb{R})$, the properties of $X^{A}$, $Y^{A}$, and $s^{A}$ can be established by adapting \cite[Sections 3.6-3.8]{CL} in a similar way as was done above for $X$, $Y$, and $s$. We omit this analysis. 

\subsection{Proof of Lemma \ref{r1r2lemma}}\label{r1r2subsec}
Suppose $q_0 \in \mathcal{S}(\R)$ is such that Assumption \ref{solitonlessassumption} holds. 
Recall from (\ref{3x3r1r2def}) that $r_1 = s_{12}/s_{11}$. Statements $(a)$ and $(c)$ of Proposition \ref{sprop} imply that $s_{12}(k)$ and $s_{11}(k)$ are smooth for $k \in (0,\infty)$. Since $s_{11}$ has no zeros by Assumption \ref{solitonlessassumption}, it follows that $r_1 \in C^\infty((0,\infty))$. Moreover, statement $(e)$ of Proposition \ref{sprop} implies that $r_1$ satisfies (\ref{r1rapiddecay}), and statement $(f)$ implies that $r_{1}$ and its derivatives have continuous boundary values at $k=0$, showing that $r_1$  also satisfies (\ref{r1r2atzero}). This completes the proof of parts (i), (ii) and (iii) of Lemma \ref{r1r2lemma} for $r_{1}$; the proof for $r_{2}$ follows similarly from the properties of $X^{A}$, $Y^{A}$, and $s^{A}$. 

It remains to prove Lemma \ref{r1r2lemma} (iv), i.e. that 
\begin{align*}
|r_{1}(k)|<1 \quad \mbox{for all } k \geq 0 \qquad \mbox{ and } \qquad |r_{2}(k)| <1 \quad \mbox{for all }  k \leq 0.
\end{align*}
For $k\leq 0$, all four entries $\{s_{ij}^{A}(k)\}_{i,j=1}^{2}$ are well-defined, so by uniqueness of $s$ we conclude that $s_{11}^{A}(k)s_{22}^{A}(k)-s_{12}^{A}(k)s_{21}^{A}(k) = s_{33}(k)$. Hence, using also the symmetries $s^{A}(k) = \mathcal{B}\overline{s^{A}(\overline{k})}\mathcal{B}$ and \eqref{symmetry of s}, we find
\begin{align}\label{lol2}
1-|r_{2}(k)|^{2} = 1 - \bigg| \frac{s_{12}^{A}(k)}{s_{11}^{A}(k)} \bigg|^{2} = \frac{s_{11}^{A}(k)s_{22}^{A}(k)-s_{12}^{A}(k)s_{21}^{A}(k)}{|s_{11}^{A}(k)|^{2}} = \frac{s_{33}(k)}{|s_{11}^{A}(k)|^{2}} = \frac{s_{11}(\omega^{2}k)}{|s_{11}^{A}(k)|^{2}}.
\end{align}
Since the right-hand side of \eqref{lol2} is non-zero for all $k \leq 0$ by Assumption \ref{solitonlessassumption}, and since the left-hand side is real and tends to $1$ as $k \to -\infty$ by Lemma \ref{r1r2lemma} (iii), we conclude that $1-|r_{2}(k)|^{2}>0$ for all $k \leq 0$. The proof of $|r_{1}(k)| <1$ for $k \geq 0$ is similar and we omit it.

\section{Proof of Theorem \ref{RHth}}\label{proofsec}
Let $q(x,t)$ be a Schwartz class solution of (\ref{3x3eq}) with existence time $T \in (0, \infty]$ and initial data $q_0 \in \mathcal{S}(\R)$. Suppose Assumption \ref{solitonlessassumption} holds and define $r_1(k)$ and $r_2(k)$ in terms of $q_0$ by (\ref{3x3r1r2def}).

\subsection{Proof of $(i)$} 
We will construct the solution $m$ of RH problem \ref{RH problem for m} explicitly in terms of solutions of certain Fredholm equations. 
Let $m_{n}$ denote the restriction of $m$ to $D_{n}$. For each $n = 1, \dots, 6$, define $m_n(x,k)$ for $k \in D_n$ as the solution of the following system of Fredholm integral equations: 
\begin{align}\label{Mndef}
(m_n)_{ij}(x,k) = \delta_{ij} + \int_{\gamma_{ij}^n} \left(e^{(x-x')\widehat{\mathcal{L}(k)}} (\mathcal{U}(x')m_n(x',k)) \right)_{ij} dx', \qquad  i,j = 1, 2,3,
\end{align}
where the contours $\gamma^n_{ij}$, $n = 1, \dots, 6$, $i,j = 1,2,3$, are defined by
 \begin{align} \label{gammaijndef}
 \gamma_{ij}^n =  \begin{cases}
 (-\infty,x),  & \re  l_i(k) < \re  l_j(k), 
	\\
(+\infty,x),  \quad & \re  l_i(k) \geq \re  l_j(k),
\end{cases} \quad \text{for} \quad k \in D_n.
\end{align}
Let $\mathsf{Z}$ denote the set of zeros of the Fredholm determinants associated with (\ref{Mndef}). The next proposition establishes the basic properties of $m_{n}$. 

\begin{proposition}\label{Mnprop}
Suppose $q_0 \in \mathcal{S}(\R)$. 
Then the equations (\ref{Mndef}) uniquely define six $3 \times 3$-matrix valued solutions $\{m_n\}_1^6$ of (\ref{xpart}) with the following properties:
\begin{enumerate}[$(a)$]
\item The function $m_n(x, k)$ is defined for $x \in \R$ and $k \in \bar{D}_n \setminus \mathsf{Z}$. For each $k \in \bar{D}_n  \setminus \mathsf{Z}$, $m_n(\cdot, k)$ is smooth and satisfies (\ref{xpart}).

\item For each $x \in \R$, the function $m_n(x,\cdot)$ is continuous for $k \in \bar{D}_n \setminus \mathsf{Z}$ and analytic for $k \in D_n\setminus \mathsf{Z}$.

\item For each $\epsilon > 0$, there exists a $C = C(\epsilon)$ such that
\begin{align}\label{Mnbounded}
|m_n(x,k)| \leq C, \qquad x \in \R, \ k \in \bar{D}_n, \ \dist(k, \mathsf{Z}) \geq \epsilon.
\end{align}

\item For each $x \in \R$ and each $j = 1, 2, \dots$, the partial derivative $\frac{\partial^j m_n}{\partial k^j}(x, \cdot)$ has a continuous extension to $\bar{D}_n \setminus \mathsf{Z}$.

\item $\det m_n(x,k) = 1$ for $x \in \R$ and $k \in \bar{D}_n \setminus \mathsf{Z}$.

\item For each $x \in \R$, the sectionally analytic function $m(x,k)$ defined by $m(x,k) = m_n(x,k)$ for $k \in D_n$ satisfies the symmetries
\begin{align}\label{Msymm}
 m(x, k) = \mathcal{A} m(x,\omega k)\mathcal{A}^{-1} = \mathcal{B} \overline{m(x,\overline{k})}\mathcal{B}, \qquad k \in \C \setminus \mathsf{Z}.
\end{align}
\end{enumerate}
\end{proposition}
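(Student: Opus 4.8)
The plan is to follow the strategy of the analogous result in \cite{CL} (Proposition 3.3 there), adapting it to the present setting where the main simplification is that $\mathcal{U}$ is bounded at the origin, so that the eigenfunctions $m_n$ extend continuously up to $k=0$. First I would set up the Fredholm theory for \eqref{Mndef}: writing the system as $(\mathrm{Id} - \mathsf{K}_n(k))m_n = I$, where $\mathsf{K}_n(k)$ is the integral operator with kernel built from $e^{(x-x')\widehat{\mathcal{L}(k)}}\mathcal{U}(x')$ restricted to the contours $\gamma_{ij}^n$, one checks, using the Schwartz decay of $q_0$ (hence of $\mathcal{U}$) together with the sign conditions \eqref{gammaijndef} on $\re(l_i-l_j)$, that $\mathsf{K}_n(k)$ is a well-defined operator on an appropriate Banach space (e.g.\ continuous bounded functions on $\R$, or an $L^\infty$-type space weighted so that the relevant exponentials are bounded) depending analytically on $k$ in $D_n$ and continuously on $\bar D_n$; moreover it is compact (Hilbert--Schmidt, or a uniform limit of finite-rank operators via truncation of the $x$-integral). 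Then Fredholm theory gives a unique solution $m_n(x,k)$ for all $k \in \bar D_n$ outside the zero set $\mathsf{Z}$ of the Fredholm determinant $\det(\mathrm{Id}-\mathsf{K}_n(k))$, and the solution formula (Cramer's rule for the resolvent) shows it is meromorphic in $k$ with poles only in $\mathsf{Z}$; this yields parts $(a)$, $(b)$, and $(d)$, the last by differentiating the integral equation in $k$ and reapplying the invertibility of $\mathrm{Id}-\mathsf{K}_n(k)$. Part $(c)$, the uniform bound away from $\mathsf{Z}$, follows from the uniform (in $x$ and in $k$ on compact subsets of $\bar D_n \setminus \mathsf{Z}$) bound on the resolvent $(\mathrm{Id}-\mathsf{K}_n(k))^{-1}$ combined with the uniform bound on $\mathsf{K}_n(k)I$; the content at $k=0$ is exactly where we depart from \cite{CL}, and here it is immediate precisely because $\mathcal{U}(x)$ has no singularity, so $\mathsf{K}_n(0)$ is an honest compact operator and the estimate extends to a neighborhood of the origin.

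For part $(a)$'s claim that $m_n$ solves the ODE \eqref{xpart}: differentiate \eqref{Mndef} in $x$. Each contour $\gamma_{ij}^n$ is a ray with endpoint $x$, so the $x$-derivative produces a boundary term $(\mathcal{U}(x)m_n(x,k))_{ij}$ together with the bracket term coming from differentiating $e^{(x-x')\widehat{\mathcal{L}}}$ inside the integral, which reassembles to $[\mathcal{L},\cdot]$ acting on the integral; this gives $\partial_x m_n - [\mathcal{L},m_n] = \mathcal{U}m_n$, as required, and smoothness in $x$ then follows by bootstrapping. Part $(e)$, that $\det m_n \equiv 1$: since $m_n$ solves $\partial_x m_n = [\mathcal{L},m_n] + \mathcal{U}m_n = \mathcal{L}m_n - m_n\mathcal{L} + \mathcal{U}m_n$, Abel's formula gives $\partial_x \log\det m_n = \tr(\mathcal{L} - \mathcal{L} + \mathcal{U}) = \tr\mathcal{U} = 0$ (the matrix $\mathcal{U}$ in \eqref{mathsfUdef intro} has zero diagonal), so $\det m_n$ is constant in $x$; evaluating the limit as $x\to\pm\infty$ along the appropriate contour directions (where $m_n \to I$, using the decay of $\mathcal{U}$ and the sign conditions) shows the constant is $1$.

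Part $(f)$, the symmetries, is the step requiring the most care. The idea is that the maps $B \mapsto \mathcal{A}B(\omega k)\mathcal{A}^{-1}$ and $B \mapsto \mathcal{B}\overline{B(\bar k)}\mathcal{B}$ permute the six sectors $D_n$ and permute the defining data: using the symmetries \eqref{Fsymm} of $\mathcal{L}$ and $\mathcal{U}$ and the fact that conjugation by $\mathcal{A}$ cyclically permutes the indices $\{1,2,3\}$ while conjugation by $\mathcal{B}$ (together with complex conjugation and $k\mapsto\bar k$) swaps the roles of the two columns/rows and reverses the relevant sign conditions, one verifies that $\mathcal{A}m_{n'}(x,\omega k)\mathcal{A}^{-1}$ satisfies the same Fredholm system \eqref{Mndef} that defines $m_n(x,k)$ on the sector $D_n$ obtained by rotating $D_{n'}$, with matching contours $\gamma_{ij}^n$. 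By the uniqueness from the Fredholm argument, the two must coincide, and similarly for the $\mathcal{B}$-symmetry; patching over all six sectors gives \eqref{Msymm} on $\C\setminus\mathsf{Z}$. The bookkeeping of which sector and which contour goes to which under each symmetry is the main obstacle, but it is entirely parallel to the corresponding verification in \cite{CL} and can be carried out by the same permutation diagram, so I would state it and refer the detailed index-chasing there.
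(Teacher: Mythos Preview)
Your proposal is correct and follows the same approach as the paper, which simply refers to \cite[Proposition~4.1]{CL} (not Proposition~3.3 as you wrote) for the detailed proof in the more complicated Boussinesq setting. The sketch you give---Fredholm theory for the integral operator $\mathsf{K}_n(k)$, analyticity and smoothness via the resolvent formula, Abel's identity combined with $\tr\mathcal{U}=0$ and $m_n\to I$ as $x\to+\infty$ for the determinant, and the symmetries via uniqueness of the Fredholm solution under the $\mathcal{A}$- and $\mathcal{B}$-transformations---is exactly the content of that reference, with the simplification at $k=0$ that you correctly identify.
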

\begin{proof}
We refer to \cite[Proposition 4.1]{CL} for a detailed proof in a similar, though more complicated, situation.
\end{proof}

Now we turn to the behavior of $m$ as $k \to \infty$. Since $\gamma_{ii}^n = (\infty,x)$ for $i = 1,2,3$ and $n = 1, \dots,6$, it follows that $m$ has the same asymptotic behavior at $\infty$ as $X$. The proof of the next lemma consists of minor adaptations of \cite[Lemma 4.3]{CL}.

\begin{lemma}\label{minftylemma}
Suppose $q_0 \in \mathcal{S}(\R)$ and $q_0 \not\equiv 0$. Given an integer $p \geq 1$, let $X_{(p)}(x,k)$ be the function defined in (\ref{Xpdef}). Then there exists an $R > 0$ such that
\begin{align}
& \big|m(x,k) - X_{(p)}(x,k) \big| \leq
\frac{C}{|k|^{p+1}}, \qquad x \in \R, \  k \in \C \setminus \Gamma, \ |k| \geq R.
\end{align}
\end{lemma}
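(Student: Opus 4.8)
The plan is to follow the strategy of \cite[Lemma 4.3]{CL}, adapting it to the present (slightly simpler) setting where the matrices $\mathcal{U},\mathcal{V}$ are bounded at the origin. The key point, already noted in the text, is that the diagonal contours satisfy $\gamma_{ii}^n = (\infty,x)$ for all $i$ and all $n$, so that the Fredholm equations \eqref{Mndef} restricted to the diagonal entries coincide exactly with the corresponding equations for $X$; more generally, for indices $(i,j)$ with $\re l_i(k) \geq \re l_j(k)$ the contour is $(\infty,x)$ as for $X$, while for $\re l_i(k) < \re l_j(k)$ it is $(-\infty,x)$. Thus $m$ and $X$ solve integral equations with the same kernel but (for some off-diagonal entries) different contours of integration. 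Since by Proposition \ref{XYprop2} the function $X$ agrees with the formal series $X_{formal}$ to all orders as $k\to\infty$, it suffices to show that $m - X$ decays faster than any power of $k$, which will then transfer the full asymptotic expansion from $X$ to $m$.

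First I would fix $n$ and work in the closed sector $\bar D_n$, writing the difference $m_n - X$ as the solution of a perturbed Volterra/Fredholm equation: subtracting \eqref{XYdefa} from \eqref{Mndef} produces, for the entries where the contours agree, a homogeneous term involving $\mathcal{U}(m_n - X)$ integrated over $(\infty,x)$, and for the entries where they differ an inhomogeneous term given by an integral of $\mathcal{U} X$ over the half-line from $\pm\infty$ where the exponential $e^{(x-x')(l_i(k)-l_j(k))}$ is bounded. The crucial observation is that on such an entry one has $\re(l_i(k) - l_j(k)) \neq 0$ in the open sector, and the exponential is decaying on the relevant contour; integrating by parts repeatedly (using the smoothness and rapid decay of $\mathcal{U} X$ guaranteed by Proposition \ref{XYprop}(g) and its $k$-derivative bounds) gains arbitrarily many powers of $1/(l_i(k)-l_j(k))$, hence of $1/k$, in these inhomogeneous terms. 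A standard Neumann series / resolvent estimate for the Fredholm equation — valid uniformly for $|k| \geq R$ once $R$ is large enough that the Fredholm determinant is bounded away from zero there, cf. Proposition \ref{Mnprop}(c) — then propagates this decay to the full solution $m_n - X$, giving $|m_n - X| \leq C|k|^{-p-1}$ for $x \in \R$, $k \in \bar D_n$, $|k| \geq R$. Combining over the six sectors and invoking Proposition \ref{XYprop2} to replace $X$ by $X_{(p)}$ up to an error $O(|k|^{-p-1})$ yields the claim.

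The main obstacle will be the uniformity in $x \in \R$ of the integration-by-parts estimates near the boundary rays of $\bar D_n$, where $\re(l_i(k) - l_j(k))$ degenerates to zero and the exponential factor ceases to be decaying. On those rays, however, the corresponding off-diagonal entry is one where the contour for $m_n$ and for $X$ in fact coincide (the inequality defining $\gamma_{ij}^n$ becomes an equality, handled by the $\re l_i \geq \re l_j$ branch), so the problematic inhomogeneous term is absent there and only the homogeneous contribution survives; continuity of all quantities up to $\bar D_n$ (Proposition \ref{Mnprop}(b),(d)) then lets one pass to the boundary. One must check that the bounded smooth rapidly-decaying majorants $f_\pm(x)$ from Propositions \ref{XYprop} and \ref{XYprop2} control the iterated integrals uniformly; this is where the argument is genuinely identical to \cite{CL} and can be cited. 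Since the text explicitly states that the proof ``consists of minor adaptations of \cite[Lemma 4.3]{CL}'', I would present the above reduction and then refer the reader to \cite{CL} for the remaining routine estimates.
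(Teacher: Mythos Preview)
Your overall plan --- defer to \cite[Lemma~4.3]{CL} after an integration-by-parts/resolvent argument --- is exactly what the paper does (it gives no proof beyond that citation). But your sketch of the argument has a real gap: you propose to form the difference $m_n - X$ in each sector $\bar D_n$ and analyze it via the subtracted integral equations, yet the full matrix $X(x,k)$ is \emph{not defined} in $\bar D_n$. By Proposition~\ref{XYprop}(a) the $j$-th column of $X$ exists only for $k$ in the sector $\omega^{3-j}\bar{\mathcal{S}}$; e.g.\ in $D_1$ only the first column of $X$ makes sense. (And for that column all $\gamma_{i1}^1=(\infty,x)$, so $(m_1)_{\cdot 1}=X_{\cdot 1}$ identically --- nothing to estimate.) For the other columns your ``inhomogeneous term given by an integral of $\mathcal{U} X$'' cannot even be written down, so the mechanism you describe does not get off the ground. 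Incidentally, when the contours do differ, the inhomogeneous contribution is an integral over all of $\R$, not over a half-line as you state.

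The fix is to compare $m_n$ directly with the globally defined polynomial truncation $X_{(p)}$ rather than with $X$. Substituting $X_{(p)}$ into \eqref{Mndef} and using the recursion \eqref{xrecursive} for the coefficients $X_j$ (which encodes exactly that $X_{(p)}$ satisfies the differential equation \eqref{xpart} up to order $k^{-p}$), together with integration by parts against $e^{(x-x')(l_i-l_j)}$, produces a residual of size $O(|k|^{-p-1})$ uniformly in $x$; the normalization \eqref{Fjnormalization} is what makes the diagonal contours $\gamma_{ii}^n=(\infty,x)$ compatible with this ansatz. The resolvent bound from Proposition~\ref{Mnprop}(c), valid for $|k|\geq R$ once $R$ is large enough, then yields $|m_n-X_{(p)}|\leq C|k|^{-p-1}$. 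This is the route taken in \cite{CL}, and is what the paper has in mind.
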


Following \cite[Lemmas 4.4, 4.5 and 4.6]{CL}, we conclude that $m$ satisfies the jumps of the RH problem \ref{RH problem for m}, except at the points $k \in \mathsf{Z}$.
\begin{lemma}[Jump condition for $m$]\label{Mjumplemma}
Let $q_0 \in \mathcal{S}(\R)$.
For each $x \in \R$, $m(x,k)$ satisfies the jump condition
\begin{align*}
m_+(x,k) = m_-(x, k) v(x, 0, k), \qquad k \in \Gamma \setminus \mathsf{Z},
\end{align*}
where $v$ is the jump matrix defined in (\ref{vdef}).
\end{lemma}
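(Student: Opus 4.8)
The plan is to relate the Fredholm eigenfunctions $m_n$ to the Volterra eigenfunctions $X$, $Y$, $X^A$, $Y^A$ on each ray of $\Gamma$, and then compute the jump by comparing the relevant columns across the ray. First I would observe that, for $k$ in the interior of a fixed $D_n$, the sign structure of $\re(l_i - l_j)$ is constant, so the contour $\gamma_{ij}^n$ in \eqref{gammaijndef} is either $(-\infty,x)$ or $(+\infty,x)$ for every pair $(i,j)$; consequently each column of $m_n$ coincides, by uniqueness of solutions of the relevant Volterra equations, with the appropriate column of one of $X$, $Y$ (and, after using the adjoint relation between minors and inverse transposes, with cofactors built from $X^A$, $Y^A$). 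More precisely, following \cite[Lemmas 4.4--4.6]{CL}, in $D_1$ (where $\re l_1 < \re l_2 < \re l_3$) the first column of $m_1$ equals the first column of $X$, etc., and on passing to the neighbouring sector the column that ``switches'' its contour is the one that changes; this is exactly the mechanism that produces the jump.

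The key steps, in order, are: (1) identify columns of $m_n$ with columns of $X$ or $Y$ on $\bar D_n$ using uniqueness of the Volterra equations \eqref{XYdef}; (2) identify the remaining column using the adjoint functions $X^A$, $Y^A$ via the cofactor/inverse-transpose relation, together with $\det m_n = 1$ from Proposition \ref{Mnprop}(e); (3) on a given ray of $\Gamma$, say the positive real axis separating $D_6$ and $D_1$, express both $m_-$ and $m_+$ in terms of the same global objects ($X$, $Y$, $s$, $s^A$) using the domain-of-definition table \eqref{sdomainofdefinition} and the scattering relation $X = Y e^{x\widehat{\mathcal{L}}} s$ (valid for compactly supported $q_0$ by Proposition \ref{sprop}(g), and extended to general $q_0 \in \mathcal{S}(\R)$ by density/continuity as in \cite{CL}); (4) read off $m_-^{-1} m_+$, substitute the definitions $r_1 = s_{12}/s_{11}$, $r_2 = s^A_{12}/s^A_{11}$, and the identities such as $1 - |r_2(k)|^2 = s_{33}(k)/|s^A_{11}(k)|^2$ from \eqref{lol2}, to match the explicit matrix $v_1$ in \eqref{vdef}; (5) obtain the jumps on $\omega\R$ and $\omega^2\R$ for free from the $\mathcal{A}$-symmetry \eqref{Msymm} and the symmetry $v(k) = \mathcal{A}^{-1} v(\omega k) \mathcal{A}$ built into \eqref{vdef}, and similarly use the $\mathcal{B}$-symmetry to pin down the $\overline{r_\ell}$ entries; (6) finally, since all the functions involved solve the $t$-independent equation \eqref{xpart} at $t=0$ and the time evolution only conjugates by $e^{t\widehat{\mathcal{Z}}}$, the factors $e^{\theta_{ij}}$ appear exactly as written, giving $v(x,0,k)$.

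The main obstacle, as in \cite{CL}, is bookkeeping rather than conceptual: one must carefully track which column of $m_n$ is governed by which Volterra equation in each of the six sectors, handle the cofactor identities for the ``adjoint'' column consistently with the chosen orientation of $\Gamma$, and make sure the normalizations at $\pm\infty$ match up so that the ratios collapse to precisely $r_1$, $r_2$ and their conjugates. The genuinely new point compared with \cite{CL} is that here $\mathcal{U}$ is bounded at the origin, so there is no singularity to excise near $k=0$; this makes the analysis on the rays slightly cleaner, and the only place care is needed is that $m$ is merely $O(1)$ (not analytic) at $k=0$, but since the lemma only asserts the jump on $\Gamma \setminus \mathsf{Z}$ (and in particular away from $0$ once $\mathsf{Z}$ is removed — note $0$ may lie in $\mathsf{Z}$ or be handled separately), this causes no difficulty. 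I would therefore present the computation in detail only on the positive real axis and invoke the symmetries \eqref{Msymm} to deduce the jumps on the other two rays.
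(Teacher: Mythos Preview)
Your proposal is correct and follows essentially the same route as the paper, which simply defers to \cite[Lemmas 4.4--4.6]{CL}: reduce first to compactly supported $q_0$ (where $m_n = Y\,e^{x\widehat{\mathcal L}}S_n(k)$ for an $x$-independent $S_n$ determined by the boundedness structure encoded in $\gamma_{ij}^n$), compute the jump $e^{-x\widehat{\mathcal L}}(S_-^{-1}S_+)$ in terms of $s$ and $s^A$, and then pass to general $q_0\in\mathcal S(\R)$ by approximation. Your column-by-column/cofactor identification and the use of the $\mathcal A$- and $\mathcal B$-symmetries to transport the jump from $\R_\pm$ to the other rays are exactly the mechanisms underlying those lemmas; note only that in the paper's ordering the explicit formula \eqref{m1 in terms of X Y and s} (Lemma~\ref{M1XYlemma}) is stated \emph{after} the jump lemma, so you should either prove the column identifications directly from the Fredholm equations \eqref{Mndef} (as \cite{CL} does) rather than quoting Lemma~\ref{M1XYlemma}, or reorder.
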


We have not yet proved that $m$ satisfies the RH problem \ref{RH problem for m}, since $m$ is only defined for $\C\setminus \mathsf{Z}$. The next lemma shows that $m$ can be explicitly written in terms of $X$, $Y$, $s$, $X^{A}$, $Y^{A}$, and $s^{A}$. Using the properties of these functions, Lemma \ref{M1XYlemma} allows us to show that $m$ can be analytically continued from $\C\setminus \mathsf{Z}$ to $\C$. The proof is identical to the proof of \cite[Lemma 4.7]{CL}, so we omit it.

\begin{lemma}\label{M1XYlemma}
Let $q_0 \in \mathcal{S}(\R)$.
The function $m_1$ can be expressed in terms of the entries of $X,Y,X^A, Y^A, s$, and $s^A$ as follows:
\begin{align}\label{m1 in terms of X Y and s}
m_1 = \begin{pmatrix} 
X_{11} & \frac{Y_{31}^AX_{23}^A - Y_{21}^AX_{33}^A}{s_{11}} & \frac{Y_{13}}{s_{33}^A} \\
X_{21} & \frac{Y_{11}^AX_{33}^A - Y_{31}^AX_{13}^A}{s_{11}} & \frac{Y_{23}}{s_{33}^A} \\
X_{31} & \frac{Y_{21}^AX_{13}^A - Y_{11}^AX_{23}^A}{s_{11}} & \frac{Y_{33}}{s_{33}^A} 
\end{pmatrix},
\end{align}
for all $x \in \R$ and $k \in \bar{D}_1 \setminus \mathsf{Z}$. There exist similar expressions for $m_{2},\ldots,m_{6}$.
\end{lemma}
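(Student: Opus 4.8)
The plan is to show that the right-hand side of \eqref{m1 in terms of X Y and s}, which I denote $\widehat m_1(x,k)$, satisfies the Fredholm system \eqref{Mndef} for $n=1$; since that system has a unique solution for $k \in \bar D_1 \setminus \mathsf Z$, this forces $\widehat m_1 = m_1$ there. It is cleanest to establish the identity first for compactly supported $q_0$ and then pass to general $q_0 \in \mathcal S(\R)$ by a density and continuity argument: every entry on the right-hand side of \eqref{m1 in terms of X Y and s} is defined on the relevant $k$-domain for arbitrary Schwartz $q_0$ by Propositions \ref{XYprop}, \ref{sprop} and their adjoint counterparts, both sides depend continuously on $q_0$ through the defining Volterra and Fredholm equations, and compactly supported functions are dense in $\mathcal S(\R)$.

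So assume $q_0$ is compactly supported. Then Proposition \ref{XYprop}$(h)$, Proposition \ref{sprop}$(g)$, and their analogues for $X^A,Y^A,s^A$ give the exact relations $X = Y e^{x\widehat{\mathcal L}} s$, $X^A = Y^A e^{-x\widehat{\mathcal L}} s^A$, $X^A = (X^{-1})^T$, $Y^A = (Y^{-1})^T$, $s^A = (s^{-1})^T$, and $\det X = \det Y = \det s = 1$; moreover $X \equiv I$, $X^A \equiv I$ for $x$ to the right of $\mathrm{supp}\,q_0$ and $Y \equiv I$, $Y^A \equiv I$ for $x$ to its left, so on the complementary rays every eigenfunction is constant in $x$ with value determined by the scattering relations. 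Since $m_1 e^{x\mathcal L}$ is a fundamental solution of $\partial_x \Psi = (\mathcal L + \mathcal U)\Psi$, each column $(m_1)_{\cdot j}$ satisfies the decoupled equation $\partial_x (m_1)_{\cdot j} - \mathcal L (m_1)_{\cdot j} + l_j (m_1)_{\cdot j} = \mathcal U (m_1)_{\cdot j}$, and \eqref{Mndef} for $n=1$ amounts to this ODE together with, on $\bar D_1$ where $\re l_1 \le \re l_2 \le \re l_3$, the normalization $(m_1)_{ij} = \delta_{ij}$ for $x$ to the right of $\mathrm{supp}\,q_0$ when $\re l_i \ge \re l_j$ and for $x$ to its left when $\re l_i < \re l_j$. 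I would verify these column by column. The first column of $\widehat m_1$ is $X_{\cdot 1}$: it solves the $j=1$ equation, and $X \equiv I$ right of the support gives $(\widehat m_1)_{i1} = \delta_{i1}$ there, matching $\gamma^1_{i1} = (+\infty,x)$. The third column is $(s^A_{33})^{-1} Y_{\cdot 3}$: it solves the $j=3$ equation (as $s^A_{33}$ is $x$-independent); left of the support $Y \equiv I$ gives $(\widehat m_1)_{13} = (\widehat m_1)_{23} = 0$, and right of the support $Y = X e^{x\widehat{\mathcal L}} s^{-1} \equiv e^{x\widehat{\mathcal L}} s^{-1}$ gives $Y_{33} \equiv (s^{-1})_{33} = s^A_{33}$, hence $(\widehat m_1)_{33} = 1$, matching $\gamma^1_{33} = (+\infty,x)$ and $\gamma^1_{13} = \gamma^1_{23} = (-\infty,x)$.

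The substantive point is the second column. Put $u = Y^A_{\cdot 1}$, $v = X^A_{\cdot 3}$; reading off from \eqref{XAYAdef} that $\partial_x u = (-\mathcal L + l_1 I - \mathcal U^T)u$ and $\partial_x v = (-\mathcal L + l_3 I - \mathcal U^T)v$, I would invoke the elementary identity $Ma \times b + a \times Mb = (\tr M)(a \times b) - M^T(a \times b)$ together with $\tr \mathcal L = \tr \mathcal U = 0$ and $l_1 + l_3 = -l_2$ to conclude that $v \times u$ solves exactly the $j=2$ column equation; a componentwise comparison identifies the second column of $\widehat m_1$ with $s_{11}^{-1}(v \times u)$. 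For the normalization: left of the support $Y^A \equiv I$, so $u \equiv e_1$ and $(v\times u)_1 \equiv 0$, giving $(\widehat m_1)_{12} = 0$ (matching $\gamma^1_{12} = (-\infty,x)$); right of the support $X^A \equiv I$ and $Y^A = X^A e^{-x\widehat{\mathcal L}} (s^A)^{-1} \equiv e^{-x\widehat{\mathcal L}} (s^A)^{-1}$ give $v \equiv e_3$, $Y^A_{11} \equiv ((s^A)^{-1})_{11} = (s^T)_{11} = s_{11}$, and $Y^A_{21} \equiv Y^A_{31} \equiv 0$, whence $(\widehat m_1)_{22} = 1$ and $(\widehat m_1)_{32} = 0$, matching $\gamma^1_{22} = \gamma^1_{32} = (+\infty,x)$. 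With the ODE and all normalizations confirmed, $\widehat m_1$ solves \eqref{Mndef} for $n=1$, so $\widehat m_1 = m_1$ on $\bar D_1 \setminus \mathsf Z$, and the density argument finishes the general case.

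Finally, the formulas for $m_2, \dots, m_6$ follow either by repeating this scheme with the contours \eqref{gammaijndef} appropriate to each $D_n$, or more economically from the symmetries \eqref{Msymm}, \eqref{XYsymm}, \eqref{symmetry of s} and their $X^A,Y^A,s^A$ analogues, which cyclically permute the sectors. The main obstacle I anticipate is bookkeeping rather than conceptual: keeping the index and sign conventions straight in the compound-eigenfunction computation for the second column, and correctly identifying the constant values of the ``wrong-end'' entries of $Y$ and $Y^A$ in terms of the minors of $s$ and $s^A$; given the domains of definition already recorded in Propositions \ref{XYprop} and \ref{sprop}, the remaining steps are routine.
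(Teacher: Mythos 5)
Your argument is essentially the strategy behind the proof that the paper omits (it cites \cite[Lemma 4.7]{CL}): check column by column that the explicit matrix solves the $x$-part of the Lax pair with the normalizations dictated by the contours $\gamma^1_{ij}$, use the adjugate/cross-product identity to handle the middle column, and conclude by uniqueness of the solution of \eqref{Mndef} off $\mathsf{Z}$. Two remarks. First, a small factual slip: to the right of the support one has $Y^A = e^{-x\widehat{\mathcal{L}}}(s^A)^{-1}$, so $Y^A_{21} = e^{-x(l_2-l_1)}s_{12}$ and $Y^A_{31} = e^{-x(l_3-l_1)}s_{13}$, which are \emph{not} identically zero in general; your claim $Y^A_{21}\equiv Y^A_{31}\equiv 0$ is false. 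It is harmless, however, because with $v = X^A_{\cdot 3}\equiv e_3$ the relevant cross-product entries are $(v\times u)_2 = u_1 = Y^A_{11} = s_{11}$ and $(v\times u)_3 = 0$, so $(\widehat m_1)_{22}=1$ and $(\widehat m_1)_{32}=0$ follow without those claims. Second, your reduction to compactly supported $q_0$ plus density is a detour that the cited proof does not take: it requires continuity of the Fredholm solution $m_1$, of the eigenfunctions and of $s$, $s^A$ with respect to the data under the truncation $\eta_j q_0 \to q_0$, together with stability of the nonvanishing of the Fredholm determinant (and of $s_{11}$, $s^A_{33}$) along the truncation--plausible and standard, but asserted rather than proved, and not available verbatim in this paper. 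The cleaner route, and the one used in \cite{CL}, is to run your column-wise verification directly for Schwartz data, replacing ``identically $I$ outside the support'' by the limits $X, X^A \to I$ as $x\to+\infty$ and $Y, Y^A \to I$ as $x\to-\infty$ furnished by the Volterra equations and the estimates \eqref{Xest}, and then showing the one-sided asymptotics force the integral equations \eqref{Mndef}; this also removes the need to invoke \eqref{XYs}, which is only stated for compact support. With these adjustments your proof is sound and matches the intended one in substance.
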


Since $q_0$ satisfies Assumption \ref{solitonlessassumption}, it follows from \eqref{m1 in terms of X Y and s} that $m$ is bounded for $k \in \bar{D}_{1}\setminus \mathsf{Z}$. Therefore, we can define the value of $m(x,t,k)$ at a point $k_j \in \mathsf{Z} \cap \bar{D}_n$ by continuity:
\begin{align}\label{Mnkjdef}
m_n(x,k_j) = \lim_{\substack{ k\to k_j \\ k \in \bar{D}_n \setminus \mathsf{Z}}} m_n(x,k).
\end{align}
We conclude that $\mathsf{Z}$ can be replaced with the empty set in all the above results.

The next lemma establishes the asymptotics of $m$ as $k \to 0$. The proof is similar to (but simpler than) the proof of \cite[Lemma 4.9]{CL}.

\begin{lemma}\label{Mat1lemma}
Suppose $q_0 \in \mathcal{S}(\R)$ is such that Assumption \ref{solitonlessassumption} holds.
Let $p \geq 1$ be an integer.
Then there are $3 \times 3$-matrix valued functions $\{\mathfrak{m}_n^{(l)}(x)\}$, $n = 1,\dots,6$, $l = 0, \dots, p$, with the following properties:
\begin{enumerate}[$(a)$]
\item The function $m$ satisfies, for $x \in \R$,
\begin{align*}
& \bigg|m_n(x,k) - \sum_{l=0}^p \mathfrak{m}_n^{(l)}(x)k^l\bigg| C\leq
|k|^{p+1}, \qquad |k| \leq \frac{1}{2}, \ k \in \bar{D}_n.
\end{align*}
\item For each $n$ and each $l \geq 0$, $\mathfrak{m}_n^{(l)}(x)$ is a smooth function of $x \in \R$.
\end{enumerate}
\end{lemma}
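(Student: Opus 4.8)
The plan is to read off the behaviour of $m$ near $k=0$ from the explicit formula for $m_1$ in terms of $X,Y,X^A,Y^A,s,s^A$ established in Lemma \ref{M1XYlemma}, together with the analogous formulas for $m_2,\dots,m_6$. Each of these six formulas expresses $m_n$ as a finite sum of products and quotients of entries of $X,Y,X^A,Y^A$ and of reciprocals of certain diagonal entries of $s$ and $s^A$. Hence it suffices to show that, for every integer $p\ge1$: \emph{(a)} each of $X,Y,X^A,Y^A$ admits a Taylor expansion $\sum_{l=0}^p\mathfrak{X}^{(l)}(x)k^l+O(|k|^{p+1})$ as $k\to0$ within the relevant closed sector, with coefficients that are smooth in $x$ and with an $O(|k|^{p+1})$ remainder that is uniform in $x$; \emph{(b)} the diagonal entries of $s$ and $s^A$ appearing in the denominators are nonzero in a neighbourhood of $k=0$, so that their reciprocals admit ordinary Taylor expansions at $k=0$; and then \emph{(c)} these expansions can be combined multiplicatively.

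For \emph{(a)}, fix $p$. Proposition \ref{XYprop}(c),(e) tells us that, column by column, $X(x,\cdot)$ is analytic in the interior of a closed sector containing $0$ with all $k$-derivatives extending continuously to the boundary, and Proposition \ref{XYprop}(g) (applied with $n=p+1$) gives a bound $|\partial_k^j(X(x,k)-I)|\le f_+(x)$, $j=0,\dots,p+1$, with $f_+$ bounded on $\R$. Taylor's formula with integral remainder along the radial segment from $0$ to $k$ (which lies in the sector) then yields the expansion, with $\mathfrak{X}^{(l)}(x)=\tfrac1{l!}\partial_k^lX(x,0)$ and a remainder bounded by a constant times $|k|^{p+1}$ uniformly in $x$. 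To see that the coefficients are smooth in $x$, insert the ansatz $X(x,k)=\sum_l\mathfrak{X}^{(l)}(x)k^l$ into the Volterra equation \eqref{XYdefa}, use $e^{(x-x')\widehat{\mathcal{L}(k)}}=\sum_{m\ge0}\tfrac1{m!}\big((x-x')k\big)^m\widehat{J}^m$, and match powers of $k$: one finds that $\mathfrak{X}^{(0)}$ solves the $k=0$ Volterra equation and that each $\mathfrak{X}^{(l)}$ with $l\ge1$ solves a Volterra equation whose inhomogeneity is built from $\mathcal{U}$ and the lower-order coefficients, so by induction $\mathfrak{X}^{(l)}\in C^\infty(\R)$ with rapid decay as $x\to+\infty$. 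The same reasoning applies to $Y$, and to $X^A,Y^A$ via the analogues of Proposition \ref{XYprop} alluded to in Section \ref{specsec}.

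For \emph{(b)}, Proposition \ref{sprop}(f) and its $s^A$-counterpart already provide termwise-differentiable Taylor expansions of $s$ and $s^A$ at $k=0$, so only the nonvanishing of the relevant denominators near $0$ needs checking. In the formula for $m_1$ these denominators are $s_{11}$ and $s^A_{33}$; Assumption \ref{solitonlessassumption} gives $s_{11}\ne0$ on $\bar{D}_1$, while the symmetry \eqref{symmetry of s} together with $s^A(k)=\mathcal{A}s^A(\omega k)\mathcal{A}^{-1}$ expresses $s^A_{33}$ as $s^A_{11}$ evaluated at a rotated argument that lies in $\bar{D}_4$, where it is nonzero by Assumption \ref{solitonlessassumption}. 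The denominators appearing in the formulas for $m_2,\dots,m_6$ are nonzero near $0$ by the same symmetry considerations. Consequently $1/s_{11}$, $1/s^A_{33}$, and their counterparts are analytic near $0$ and possess ordinary Taylor expansions there.

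For \emph{(c)}, note that the class of functions admitting an order-$p$ Taylor expansion at $0$ with coefficients bounded uniformly near $0$ and a remainder that is $O(|k|^{p+1})$ uniformly in $x$ is closed under products and quotients: expand the factors, discard every product term containing a remainder factor (it is $O(|k|^{p+1})$ because the other factors are uniformly bounded), and truncate the resulting polynomial in $k$ to degree $p$ (the discarded tail is again $O(|k|^{p+1})$). Applying this to the formulas for $m_n$ produces the coefficients $\mathfrak{m}_n^{(l)}(x)$, which are polynomial expressions in the (smooth) Taylor coefficients of the building blocks and hence lie in $C^\infty(\R)$; in particular the $l=0$ term recovers the $O(1)$ behaviour at $k=0$, consistent with Proposition \ref{Mnprop}(c). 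I do not expect any serious obstacle here — the argument is a simplification of the proof of \cite[Lemma 4.9]{CL}, the crucial simplification being that, since $\mathcal{U}$ and $\mathcal{V}$ are regular at the origin, the functions $X,Y,X^A,Y^A$ are themselves regular (not singular) at $k=0$. The only points requiring care are the bookkeeping in \emph{(b)}, namely matching the correctly rotated sectors when invoking Assumption \ref{solitonlessassumption}, and ensuring that every remainder estimate in \emph{(c)} is uniform in $x$, which is exactly what the bounds $f_\pm$ of Proposition \ref{XYprop}(g) are there to guarantee.
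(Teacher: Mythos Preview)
Your approach is correct and is precisely what the paper intends: it simply refers to \cite[Lemma 4.9]{CL} and notes that the argument there simplifies here because $X,Y,X^A,Y^A$ are regular at $k=0$, which is exactly the simplification you exploit by feeding the explicit formula from Lemma \ref{M1XYlemma} into Taylor expansions of the building blocks. One small bookkeeping correction in your step \emph{(b)}: for $k\in\bar D_1$ the rotated point $\omega^2 k$ lands in $\bar D_5$, not $\bar D_4$; this does not matter for the lemma, since you only need nonvanishing near $k=0$, and $0\in\bar D_4$ together with Assumption \ref{solitonlessassumption} and continuity already gives $s^A_{11}(0)\neq0$ and hence $s^A_{33}(k)\neq0$ for $|k|$ small.
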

It follows from the above lemmas that $m(x,k)$ (or more precisely, $m(x,0,k)$) defined by \eqref{Mndef} satisfies the RH problem \ref{RH problem for m} with $t=0$. Moreover, using Lemma \ref{minftylemma} and recalling equations (\ref{X1Y1explicit}) and (\ref{Xpdef}), we infer that (\ref{qlim}) holds for $t = 0$. These arguments can be extended to any $t \in [0,T)$ as in \cite[Section 5]{CL} by replacing $u_0(x)$  with $u(x,t)$ in the above steps. Since the jump matrix has unit determinant, uniqueness of the RH problem for $m$ follows in the standard way. 
This completes the proof of part $(i)$ of Theorem \ref{RHth}.

\subsection{Proof of $(ii)$} 
The RH problem \ref{RH problem for m} for $m$ is a regularized version of the RH problem associated with the ``good'' Boussinesq equation obtained in \cite{CL}, and the asymptotic analysis of this regularized RH problem was performed as a step in the derivation of long-time asymptotics for the ``good'' Boussinesq equation presented in \cite{CLW}.
More precisely, for $\zeta:=x/t \in (\zeta_0,\infty)$, it follows from \cite{CLW} that 
\begin{align*}
\lim_{k\to \infty} \big(k \, m(x,t,k)\big)_{13} 
& =\lim_{k\to \infty} \big(k \, \hat{m}(x,t,k) \Delta^{-1}(\zeta,k)\big)_{13} 
	\\
& =\lim_{k\to \infty} \big(k (\hat{m}(x,t,k) - I) \Delta^{-1}(\zeta,k)\big)_{13} + \lim_{k\to \infty} \big(k \Delta^{-1}(\zeta,k)\big)_{13},
\end{align*}
where $\Delta^{-1}$ is given by
\begin{align*}
\Delta^{-1}(\zeta,k) = \begin{pmatrix} 
\frac{\delta_3(\zeta, k)}{\delta_1(\zeta, k)} & 0 & 0 \\
0 & \frac{\delta_1(\zeta, k)}{\delta_5(\zeta, k)} & 0 \\
0 & 0 & \frac{\delta_5(\zeta, k)}{\delta_3(\zeta, k)} \end{pmatrix}, \quad k \in \mathbb{C}\setminus \big( [k_{0},+\infty) \cup \omega [k_{0},+\infty) \cup \omega^{2} [k_{0},+\infty) \big)
\end{align*}
with $k_0 = k_0(\zeta) = \zeta/2$ and
\begin{align}
& \delta_1(\zeta, k) = e^{-i  \nu \ln_{0}(k-k_{0})}e^{-\chi_{1}(\zeta,k)}, & & k \in \C \setminus [k_{0},+\infty), \label{def of delta1} \\
& \delta_3(\zeta,k) = \delta_1(\zeta,\omega^{2}k), & & k \in \mathbb{C}\setminus \omega [k_{0},+\infty), \label{def of delta3} \\
& \delta_5(\zeta,k) = \delta_1(\zeta,\omega k), & & k \in \mathbb{C}\setminus \omega^{2} [k_{0},+\infty). \label{def of delta5}
\end{align}
The quantities $\nu = \nu(\zeta) \geq 0$ and $\chi_{1}(\zeta,k)$ that appear in the definition of $\delta_{1}$ are given by
\begin{align*}
\nu = - \frac{1}{2\pi}\ln(1-|r_1(k_{0})|^{2}), \qquad \chi_{1}(\zeta,k) = \frac{1}{2\pi i} \int_{k_{0}}^{\infty}  \ln_{0}(k-s) d\ln(1-|r_1(s)|^{2}),
\end{align*}
and $\ln_0(k) = \ln|k| + i \arg_0 k$ with $\arg_0 k\in [0,2\pi)$. Since $\Delta^{-1}$ is a diagonal matrix, clearly
\begin{align*}
\lim_{k\to \infty} \big(k \Delta^{-1}(\zeta,k)\big)_{13} = 0.
\end{align*}
As $t \to \infty$, we have, by \cite[Section 6]{CLW},
\begin{align}\label{limlhatm2}
\lim_{k\to \infty} k(\hat{m}(x,t,k) - I)
= & \; \frac{ \sum_{j=0}^2 \omega^j \mathcal{A}^{-j} Y(\zeta,t) m_1^{X}(\mathrm{q}(\zeta)) Y(\zeta,t)^{-1}\mathcal{A}^j}{3^{1/4}\sqrt{2}\sqrt{t}} + O(t^{-1}\ln t)
\end{align}
uniformly for $\zeta := x/t \in \mathcal{I}$ where $\mathcal{I}$ is any compact subset of $(\zeta_0,\infty)$. The explicit expressions for $Y(\zeta,t)$, $\mathrm{q}(\zeta)$ and $m_{1}^{X}$ are recalled below. Note that $\Delta^{-1}(\zeta, k) = I + O(k^{-1})$ as $k \to \infty$. Hence, as $t \to \infty$
\begin{align}\nonumber
q(x,t) & = \lim_{k\to \infty} \big(k \, m(x,t,k)\big)_{13} 
 = \lim_{k\to \infty} \big(k (\hat{m}(x,t,k) - I) \Delta^{-1}(\zeta, k)\big)_{13} 
  = \lim_{k\to \infty} \big(k \hat{m}(x,t,k)\big)_{13} 
	\\ \label{qasymptotics}
& = \frac{ \sum_{j=0}^2 \omega^j \big(\mathcal{A}^{-j} Y(\zeta,t) m_1^{X}(q(\zeta)) Y(\zeta,t)^{-1}\mathcal{A}^j\big)_{13}}{3^{1/4}\sqrt{2}\sqrt{t}}+ O(t^{-1}\ln t)
\end{align}
uniformly for $\zeta\in \mathcal{I}$.
Using that $\mathrm{q} = \mathrm{q}(\zeta) = r_1(k_0) \in \D:= \{z \in \mathbb{C}: |z|<1\}$ and that
\begin{align}
& Y(\zeta,t) = \begin{pmatrix} 
d_0^{1/2}(\zeta, t)e^{-\frac{t}{2}\Phi_{21}(\zeta, k_0)} & 0 & 0 \\
0 & d_0^{-1/2}(\zeta, t)e^{\frac{t}{2}\Phi_{21}(\zeta, k_0)} & 0 \\
0 & 0 & 1 \end{pmatrix}, \nonumber \\
& \Phi_{21}(\zeta,k_0) =  \omega(\omega-1) k_0^2 = -i\sqrt{3} k_0^2, \label{def of Phi21 at k0} \\
& d_0(\zeta, t) = (2\sqrt{3}t)^{-i\nu} e^{2\chi_{1}(\zeta,k_{0})} \delta_3(\zeta,k_{0})\delta_5(\zeta,k_{0}), \label{def of d0} \\
& m_1^X(\mathrm{q}) =\begin{pmatrix} 
0 & \beta_{12} & 0 	\\
\beta_{21} & 0 & 0 \\
0 & 0 & 0 \end{pmatrix}, \nonumber
\end{align}
where $\beta_{12}$ and $\beta_{21}$ are defined by
\begin{align*}
\beta_{12}=\frac{\sqrt{2\pi}e^{-\frac{\pi i}{4}} e^{-\frac{5\pi \nu}{2}} }{\bar{\mathrm{q}} \Gamma(-i\nu)} ,\qquad
\beta_{21}=\frac{\sqrt{2\pi}e^{\frac{\pi i}{4}} e^{\frac{3\pi \nu}{2}} }{\mathrm{q} \Gamma(i\nu)},
\end{align*}
we obtain after a straightforward calculation that
$$\big(\mathcal{A}^{-j} Y(\zeta,t) m_1^{X}(\mathrm{q}(\zeta)) Y(\zeta,t)^{-1}\mathcal{A}^j\big)_{13}
= \begin{cases} 0, & j = 0,2, \\
\frac{\beta_{21} e^{t\Phi_{21}(\zeta, k_0)}}{d_0(\zeta,t)}, & j = 1.
\end{cases}$$
Hence (\ref{qasymptotics}) yields
\begin{align}\label{q asymp1}
q(x,t) & = \frac{\omega \beta_{21} e^{t\Phi_{21}(\zeta, k_0)}}{3^{1/4}d_0(\zeta, t) \sqrt{2t}}+ O(t^{-1}\ln t)
= \frac{\omega e^{t\Phi_{21}(\zeta, k_0)}}{3^{1/4}d_0(\zeta, t) \sqrt{2t}}\frac{\sqrt{2\pi}e^{\frac{\pi i}{4}} e^{\frac{3\pi \nu}{2}} }{\mathrm{q}(\zeta)  \Gamma(i\nu)} + O(t^{-1}\ln t)
\end{align}
as $t \to \infty$ uniformly for $\zeta\in \mathcal{I}$. Substituting \eqref{def of delta1}-\eqref{def of delta5} into  \eqref{def of d0}, we get
\begin{align*}
& d_0(\zeta, t) = \; (2\sqrt{3}t)^{-i\nu}  e^{2\chi_{1}(\zeta,k_{0})} \delta_1(\zeta, \omega^2 k_0) \delta_1(\zeta, \omega k_0)
	\\
& =  (2\sqrt{3}t)^{-i\nu}  \exp \bigg( -i  \nu \ln_{0}(\omega^2 k_0 -k_{0})-i  \nu \ln_{0}(\omega k_0 -k_{0})  \\
& \hspace{3.2cm}+ 2\chi_{1}(\zeta,k_{0}) -\chi_{1}(\zeta, \omega^2 k_0) -\chi_{1}(\zeta, \omega k_0) \bigg).
\end{align*}
Hence, using that
\begin{align*}
& \ln_{0}(\omega^2 k_0 -k_{0}) = \ln|\omega^{2} k_{0}-k_{0}| + \frac{7 \pi i}{6}, \qquad \ln_{0}(\omega k_{0}-k_{0}) = \ln|\omega k_{0}-k_{0}|+ \frac{5\pi i}{6}, \\
& 2\ln_{0}(k_0-s) - \ln_{0}(\omega^2 k_0-s) - \ln_{0}(\omega k_0-s) = 2 \ln \bigg| \frac{s-k_{0}}{s-\omega k_{0}} \bigg|,
\end{align*}
we obtain
\begin{align*}
d_{0}(\zeta,t) = \exp \bigg( -i\nu \ln(2\sqrt{3}t)-i  \nu \ln(3k_0^2) + 2\pi \nu +\frac{1}{\pi i} \int_{k_{0}}^{+\infty} \ln\bigg|\frac{s-k_0}{s - \omega k_0}\bigg| d\ln(1-|r_1(s)|^{2}) \bigg).
\end{align*}
Substituting this expression into \eqref{q asymp1}, and using \eqref{def of Phi21 at k0} and $\mathrm{q}=r_{1}(k_{0})$, we arrive at
\begin{align}\label{asymp for q final comp}
q(x,t) = &\; \frac{\sqrt{\pi} e^{-\frac{\pi \nu}{2}}}{3^{1/4} \sqrt{t}} 
\frac{e^{i\phi}}{r_1(k_0) \Gamma(i\nu)}
 + O(t^{-1}\ln t), \qquad t \to \infty,
\end{align}
uniformly for $\zeta\in \mathcal{I}$,
where
\begin{align*}
\phi = \phi(\zeta) = \frac{11\pi}{12}   + \nu \ln(6\sqrt{3}tk_0^2) - \sqrt{3} k_0^2 t
+ \frac{1}{\pi} \int_{k_{0}}^{+\infty} \ln\bigg|\frac{s-k_0}{s - \omega k_0}\bigg| d\ln(1-|r_1(s)|^{2}).
\end{align*}
This completes the proof of part (ii) of Theorem \ref{RHth}.

\subsection{Proof of $(iii)$.} \label{proofofiii}
Let us indicate the dependence on the initial data $q_{0}\in \mathcal{S}(\mathbb{R})$ explicitly by writing $X(x,k;q_{0})$ for $X(x,k)$ etc. Let $q(x,t)$ be a Schwartz class solution of \eqref{3x3eq} with existence time $T \in (0,\infty]$ and initial data $q_{0}$, and define $f(x,t)=-q(-x,t)$ and $f_{0}(x)=-q_{0}(-x)$. It is easy to check that $f(x,t)$ is a Schwartz class solution of \eqref{3x3eq} with existence time $T \in (0,\infty]$ and initial data $f_{0} \in \mathcal{S}(\mathbb{R})$. Combining this symmetry with \eqref{asymp for q final comp}, we infer that
\begin{align}\label{asymp for -q(-x) final comp}
q(x,t) = &\; -\frac{\sqrt{\pi} e^{-\frac{\pi \nu(-\zeta;f_{0})}{2}}}{3^{1/4} \sqrt{t}} 
\frac{e^{i\phi(-\zeta;f_{0})}}{r_1(-k_{0};f_{0}) \Gamma(i\nu(-\zeta;f_{0}))}
 + O(t^{-1}\ln t), \qquad t \to \infty,
\end{align}
uniformly for $x/t$ in compact subsets of $(-\infty,0)$. It follows from \eqref{mathsfUdef intro} that
$\mathcal{U}(-x;f_{0}) = - \mathcal{U}(x;q_{0})$. Since $\mathcal{L}(-k)=-\mathcal{L}(k)$, we deduce from \eqref{XYdef} and (\ref{XAYAdef}) that
\begin{align*}
X(-x,-k;f_{0}) = Y(x,k;q_{0}), \qquad X^{A}(-x,-k;f_{0}) = Y^{A}(x,k;q_{0}).
\end{align*}
Hence, by \eqref{XYs}, for all compactly supported $q_{0}\in \mathcal{S}(\mathbb{R})$,
\begin{align*}
s(-k;f_{0}) = (s^{A}(k;q_{0}))^{T},  \qquad s^{A}(-k;f_{0}) = (s(k;q_{0}))^{T}.
\end{align*}
In particular, for $k\leq 0$, we have
\begin{align*}
r_{1}(-k;f_{0}) = \frac{s(-k;f_{0})_{12}}{s(-k;f_{0})_{11}} = \frac{s^{A}(k;q_{0})_{21}}{s^{A}(k;q_{0})_{11}} =: \tilde{r}_{2}(k;q_{0}).
\end{align*}
Furthermore, the symmetry $s^{A}(k) = \mathcal{B}\overline{s^{A}(\overline{k})}\mathcal{B}$ implies that
\begin{align*}
|\tilde{r}_{2}(k;q_{0})| = |r_{2}(k;q_{0})|, \qquad k \leq 0, 
\end{align*}
and therefore 
\begin{align*}
\nu(-\zeta;f_{0}) & = - \frac{1}{2\pi} \ln (1-|r_{1}(-k_{0};f_{0})|^{2}) = - \frac{1}{2\pi} \ln (1-|r_{2}(k_{0};q_{0})|^{2}) =: \tilde{\nu}, \\
\phi(-\zeta;f_{0}) & = \frac{11\pi}{12}   + \nu(-\zeta;f_{0}) \ln(6\sqrt{3}tk_0^2) - \sqrt{3} k_0^2 t
+ \frac{1}{\pi} \int_{-k_{0}}^{+\infty} \ln\bigg|\frac{s+k_0}{s + \omega k_0}\bigg| d\ln(1-|r_1(s;f_{0})|^{2}) \\
& = \frac{11\pi}{12}   + \tilde{\nu} \ln(6\sqrt{3}tk_0^2) - \sqrt{3} k_0^2 t
+ \frac{1}{\pi} \int_{-\infty}^{k_{0}} \ln\bigg|\frac{s-k_0}{s - \omega k_0}\bigg| d\ln(1-|r_2(s;q_{0})|^{2}).
\end{align*}
This concludes the proof of part (iii) of Theorem \ref{RHth}.

\appendix

\section{Upper bounds on $|r_{1}|$ and $|r_{2}|$}\label{section:appendix}
We will prove that if $q_{0} \in \mathcal{S}(\mathbb{R})$ has compact support, then
\begin{align}\label{general upper bounds}
|r_{1}(k;q_{0})|<1 \quad \mbox{for all } k \geq 0 \qquad \mbox{ and } \qquad |r_{2}(k;q_{0})| <1 \quad \mbox{for all }  k \leq 0,
\end{align}
where we have indicated the dependence of $r_1$ and $r_2$ on the initial data $q_0$ for later convenience. We will also prove that
\begin{align}\label{general upper bounds unbounded}
|r_{1}(k;q_{0})|\leq 1 \quad \mbox{for all } k \geq 0 \qquad \mbox{ and } \qquad |r_{2}(k;q_{0})| \leq1 \quad \mbox{for all }  k \leq 0
\end{align}
holds whenever $q_{0} \in \mathcal{S}(\mathbb{R})$. Note from \eqref{ljzjdef} and \eqref{mathsfUdef intro} that 
\begin{align*}
\mathcal{L}(k) = -\mathcal{L}(-k), \qquad \mathcal{U}(x;q_{0}) = - \mathcal{U}^{T}(x;\omega^{2}\bar{q}_{0}).
\end{align*}
This implies by \eqref{XXAdef intro}, \eqref{sdef}, and \eqref{sAdef} that 
\begin{align*}
X(x,k;q_{0}) = X^{A}(x,-k;\omega^{2}\bar{q}_{0}) \quad \mbox{ and } \quad s(k;q_{0}) = s^{A}(-k;\omega^{2}\bar{q}_{0}),
\end{align*}
which, by \eqref{3x3r1r2def}, in turn implies that $r_{1}(k;q_{0}) = r_{2}(-k;\omega^{2}\bar{q}_{0})$. Hence, to prove \eqref{general upper bounds} and \eqref{general upper bounds unbounded}, it is sufficient to prove that 
\begin{align}
& |r_{2}(k;q_{0})| <1 \qquad \mbox{for all }  k \leq 0  \mbox{ and for all compactly supported } q_{0} \in \mathcal{S}(\mathbb{R}), \label{lol3} \\
& |r_{2}(k;q_{0})| \leq 1 \qquad \mbox{for all }  k \leq 0  \mbox{ and for all } q_{0} \in \mathcal{S}(\mathbb{R}). \label{lol4}
\end{align}

Let us fix $q_{0} \in \mathcal{S}(\mathbb{R})$ and $k \in (-\infty,0]$, and assume that $q_{0}$ is compactly supported. We will prove that $|r_{2}(k;q_{0})| <1$ for $k \leq 0$. Let $x_{l}\in \mathbb{R}$ be a point to the left of the support of $q_{0}$. By \eqref{XYdefb}, we have $Y(x_{l},k)=I$. Hence, by taking the inverse transpose of \eqref{symmetry of s} and \eqref{XYs}, we see that
\begin{align*}
1-|r_{2}(k)|^{2} = 1 - \bigg| \frac{s_{12}^{A}(k)}{s_{11}^{A}(k)} \bigg|^{2} = \frac{s_{11}^{A}(k)s_{22}^{A}(k)-s_{12}^{A}(k)s_{21}^{A}(k)}{|s_{11}^{A}(k)|^{2}} = \frac{s_{33}(k)}{|s_{11}^{A}(k)|^{2}} = \frac{X_{33}(x_{l},k)}{|s_{11}^{A}(k)|^{2}}.
\end{align*}
We will prove that $X_{33}(x,k)>0$ for all $x \in \mathbb{R}$ and for all $k \leq 0$. Define $P = P(k)$ by
\begin{align*}
P = \begin{pmatrix}
\omega & \omega^{2} & 1 \\
\omega^{2}k & \omega k & k \\
k^{2} & k^{2} & k^{2}
\end{pmatrix}
\end{align*}
and define $\widetilde{L} = \widetilde{L}(x,k)$ by
\begin{align*}
\widetilde{L} = P(\mathcal{L} + \mathcal{U})P^{-1} = \begin{pmatrix}
(\omega-\omega^{2})(q_{0}-\bar{q}_{0}) & 1 & 0 \\
0 & (\omega^{2}-1)(q_{0}-\omega \bar{q}_{0}) & 1 \\
k^{3} & 0 & (1-\omega)(q_{0}-\omega^{2} \bar{q}_{0})
\end{pmatrix}.
\end{align*}
Since $X$ satisfies \eqref{Xlax}, the matrix $\widetilde{X}(x,k) := P(k)X(x,k) e^{x\mathcal{L}(k)}$ satisfies
\begin{align}\label{x part of Xtilde}
\widetilde{X}_{x} = \widetilde{L} \widetilde{X}.
\end{align}
Note that 
\begin{align}\label{X33 in terms of f1f2f3}
X_{33} = \frac{1}{3}(f_{1}+f_{2}+f_{3}) e^{-x k},
\end{align}
where
\begin{align*}
f_{1} = \widetilde{X}_{13}, \qquad f_{2} =k^{-1} \widetilde{X}_{23}, \qquad f_{3} = k^{-2} \widetilde{X}_{33}.
\end{align*}
Since the functions $f_{1}$, $f_{2}$, and $f_{3}$ can be rewritten as
\begin{align*}
f_{1} = \sum_{j=1}^{3}\omega^{j}X_{j3}e^{xk}, \qquad f_{2} = \sum_{j=1}^{3}\omega^{2j}X_{j3}e^{xk}, \qquad f_{3} = \sum_{j=1}^{3}X_{j3}e^{xk},
\end{align*}
they are well-defined for $k=0$. Clearly, $X_{33}>0$ is equivalent to $f_{1}+f_{2}+f_{3}>0$. We will show that in fact 
\begin{align}\label{inequality for fj to be proved}
f_{j}(x,k)>0 \qquad \mbox{ for all } j=1,2,3, \; x \in \mathbb{R}, \; k\leq 0.
\end{align}
From \eqref{x part of Xtilde}, we get
\begin{subequations}\label{f1f2f3}
\begin{align}
& k f_{2} = \alpha_{1} f_{1} + f_{1}', & & \alpha_{1}(x) := (\omega^{2}-\omega)q_{0}(x) + (\omega-\omega^{2})\overline{q_{0}(x)} , \label{f2} \\
& k f_{3} = \alpha_{2} f_{2} + f_{2}', & & \alpha_{2}(x) := (1-\omega^{2})q_{0}(x) + (1-\omega)\overline{q_{0}(x)}, \label{f3} \\
& k f_{1} = \alpha_{3}  f_{3} + f_{3}', & & \alpha_{3}(x) := (\omega-1)q_{0}(x) + (\omega^{2}-1)\overline{q_{0}(x)}, \label{f1}
\end{align}
\end{subequations}
together with the normalization conditions
\begin{align}\label{initial conditions}
\lim_{x \to + \infty}f_{j}(x,k)e^{-xk} = 1, \qquad \lim_{x \to + \infty}f_{j}'(x,k)e^{-xk} = k, \qquad \lim_{x \to + \infty}f_{j}''(x,k)e^{-xk} = k^{2},
\end{align}
where prime denotes differentiation with respect to $x$.
If $k=0$, this system decouples and $f_{1},f_{2},f_{3}$ can be solved for explicitly:
\begin{align*}
& f_{1}(x,0) = e^{-\int_{+\infty}^{x}\alpha_{1}}, & &  f_{2}(x,0) = e^{-\int_{+\infty}^{x}\alpha_{2}}, & &  f_{3}(x,0) = e^{-\int_{+\infty}^{x}\alpha_{3}}.
\end{align*}
Since the functions $\alpha_{1}$, $\alpha_{2}$, $\alpha_{3}$ are real-valued, \eqref{inequality for fj to be proved} is proved for $k=0$. Using \eqref{f1f2f3} and the fact that
\begin{align*}
\alpha_{1}+\alpha_{2}+\alpha_{3}=0, \qquad \alpha_{1}\alpha_{2}+\alpha_{1}\alpha_{3}+\alpha_{2}\alpha_{3} = -9|q_{0}|^{2},
\end{align*}
we infer that the functions $f_1, f_2, f_3$ satisfy the following third-order ordinary differential equations
\begin{align}\nonumber
& f_{1}''' + (-9|q_{0}|^{2}+2\alpha_{1}'+\alpha_{2}')f_{1}' + (-k^{3}+\alpha_{1}\alpha_{2}\alpha_{3} + \alpha_{2}\alpha_{1}'+\alpha_{3}\alpha_{1}'+\alpha_{1}\alpha_{2}' + \alpha_{1}'')f_{1} = 0, 
	\\\nonumber
& f_{2}''' + (-9|q_{0}|^{2}+2\alpha_{2}'+\alpha_{3}')f_{2}' + (-k^{3}+\alpha_{1}\alpha_{2}\alpha_{3} + \alpha_{3}\alpha_{2}'+\alpha_{1}\alpha_{2}'+\alpha_{2}\alpha_{3}' + \alpha_{2}'')f_{2} = 0, 
	\\ \label{f3eq}
& f_{3}''' + (-9|q_{0}|^{2}+2\alpha_{3}'+\alpha_{1}')f_{3}' + (-k^{3}+\alpha_{1}\alpha_{2}\alpha_{3} + \alpha_{1}\alpha_{3}'+\alpha_{2}\alpha_{3}'+\alpha_{3}\alpha_{1}' + \alpha_{3}'')f_{3} = 0.
\end{align}

If $\tilde{q}_{0} = \omega q_{0}$, then $\overline{\tilde{q}_{0}} = \omega^2 \bar{q}_{0}$ and hence
\begin{align*}
& \alpha_{1}(x; \omega q_0) = (1-\omega^2)q_{0} + (1-\omega)\bar{q}_{0} = \alpha_{2}(x;q_0),
	\\
& \alpha_{2}(x; \omega q_0) = (\omega-1)q_{0} + (\omega^{2}-1)\bar{q}_{0} = \alpha_{3}(x;q_0), 
	\\
& \alpha_{3} (x; \omega q_0) =  (\omega^{2}-\omega)q_{0} + (\omega-\omega^{2})\bar{q}_{0}  = \alpha_{1}(x;q_0).
\end{align*}
It follows that the equations for $f_2$ and $f_3$ are obtained by replacing $q_{0}$ by $\omega q_{0}$ and $\omega^2 q_{0}$, respectively, in the equation for $f_1$. Moreover, the normalization conditions \eqref{initial conditions} are invariant under these replacements. Thus,
\begin{align}\label{relation between f1f2f3}
f_1(x,k; \omega q_{0}) = f_2(x,k; q_{0}), \qquad f_1(x,k; \omega^2 q_{0}) = f_3(x,k; q_{0}).
\end{align}
We conclude that it is enough to consider one of the three equations. Indeed, if the inequality (\ref{inequality for fj to be proved}) holds for either $j = 1$, $j = 2$, or $j = 3$ for any choice of the complex-valued compactly supported initial data, then it automatically holds for $j = 1$, $j = 2$, and $j = 3$.

Let us consider the equation (\ref{f3eq}) for $f_3$. Fix $k < 0$ and let 
$$g(x) := \frac{f_3(x,k)}{f_3(x,0)}.$$
We know that $f_3(x,0) = e^{-\int_{+\infty}^x \alpha_3(x') dx'}$, and hence it follows that $g$ satisfies the equation
\begin{align}\label{geq2}
g''' + 3p_1g'' + 3p_2g' = k^3 g,
\end{align}
where
\begin{align}
&  p_1(x) := -\alpha_{3}(x) = (1-\omega)q_{0} + (1-\omega^2)\bar{q}_{0},
  	\\
& p_2(x) := 3|q_{0}|^2 - 3\omega q_{0}^2 - 3\omega^2 \bar{q}_{0}^2 - \omega q_{0}' - \omega^2 \bar{q}_{0}'.	
\end{align}
We next solve this equation using the method of variation of constants. 
Let $x_0 \in \R$ be a point to the right of the support of $q_0$. 
Let $y_1(x) \equiv 1$, $y_2(x)$, $y_3(x)$, denote the three linearly independent solutions of the  equation
\begin{align}\label{yeq}
y''' + 3p_1y'' + 3p_2y' = 0
\end{align}
satisfying the initial conditions
\begin{subequations}
\begin{align}
& y_1(x_0) = 1, \quad y_1'(x_0) = 0, \quad y_1''(x_0) = 0, \label{y1 initial}
	\\
& y_2(x_0) = 0, \quad y_2'(x_0) = 1, \quad y_2''(x_0) = 0, \label{y2 initial} 
	\\
& y_3(x_0) = 0, \quad y_3'(x_0) = 0, \quad y_3''(x_0) = 1.  \label{y3 initial}
\end{align}
\end{subequations}
Note that $g(x) = f_3(x,k) = e^{xk} > 0$ for all $x \geq x_{0}$. 
We seek $c_j(x)$, $j = 1,2,3$, such that
\begin{align}\label{gsumcy}
g(x) = \sum_{j=1}^3 c_j(x) y_j(x).
\end{align}
Assume that
\begin{align}\label{cjsystem}
\sum_{j=1}^3 c_j'(x) y_j(x) = 0, \qquad \sum_{j=1}^3 c_j'(x) y_j'(x) = 0, \qquad
\sum_{j=1}^3 c_j'(x) y_j''(x) = k^3 g(x).
\end{align}
Repeated differentiation of (\ref{gsumcy}) gives
$$g'(x) = \sum_{j=1}^3 c_j(x) y_j'(x), \qquad g''(x) = \sum_{j=1}^3 c_j(x) y_j''(x),$$
and
\begin{align*}
g''' & = \sum_{j=1}^3 c_j' y_j'' + \sum_{j=1}^3 c_j y_j'''
= k^3 g - \sum_{j=1}^3 c_j (3p_1y_j'' + 3p_2y_j')
 = k^3 g - (3p_1g'' + 3p_2g'),
\end{align*}
showing that $g$ indeed satisfies (\ref{geq2}).
The equations (\ref{cjsystem}) can be written as 
$$\begin{pmatrix} y_1 & y_2 & y_3 \\
y_1' & y_2' & y_3' \\
y_1'' & y_2'' & y_3''
\end{pmatrix} \begin{pmatrix} c_1' \\ c_2' \\ c_3' \end{pmatrix} = \begin{pmatrix} 0 \\ 0 \\ k^3 g \end{pmatrix}$$
and by Cramer's rule the solution of this linear system is
$$c_1' = \frac{\det \begin{pmatrix} 0 & y_2 & y_3 \\
0 & y_2' & y_3' \\
k^3g & y_2'' & y_3''
\end{pmatrix}}{W(x)}, \quad
c_2' = \frac{\det \begin{pmatrix} y_1 & 0 & y_3 \\
y_1' & 0 & y_3' \\
y_1'' & k^3g & y_3''
\end{pmatrix}}{W(x)}, \quad
c_3' = \frac{\det \begin{pmatrix} y_1 & y_2 & 0 \\
y_1' & y_2' & 0 \\
y_1'' & y_2'' & k^3g
\end{pmatrix}}{W(x)},$$
where
$$W(x) := \det \begin{pmatrix} y_1(x) & y_2(x) & y_3(x) \\
y_1'(x) & y_2'(x) & y_3'(x) \\
y_1''(x) & y_2''(x) & y_3''(x)
\end{pmatrix}.$$
Thus
\begin{align*}
g(x) = &\; \sum_{j=1}^3 c_j(x) y_j(x)
= \sum_{j=1}^3 C_j y_j(x) 
+ \int_{x_0}^x  \frac{k^3g(t)}{W(t)} \det \begin{pmatrix} y_1(x) & y_2(x) & y_3(x) \\
y_1(t) & y_2(t) & y_3(t) \\
y_1'(t) & y_2'(t) & y_3'(t)
\end{pmatrix} dt,
\end{align*}
where the constants $\{C_j\}_1^3$ are determined by the initial conditions
\begin{align}\label{ICscompactcase}
g(x_0) = e^{kx_0} > 0, \quad g'(x_0) = ke^{kx_0} < 0, \quad g''(x_0) = k^2e^{kx_0} > 0.
\end{align}
Using that $y_1 \equiv 1$, we find
$$W(x) = \det \begin{pmatrix} y_2'(x) & y_3'(x) \\ y_2''(x) & y_3''(x)
\end{pmatrix}$$
and
\begin{align}\label{eq for g}
g(x) = &\; C_1 + C_2 y_2(x) + C_3 y_3(x) 
+ \int_{x_0}^x  \frac{k^3g(t)}{W(t)} \det \begin{pmatrix} 1 & y_2(x) & y_3(x) \\ 1 & y_2(t) & y_3(t) \\  0 & y_2'(t) & y_3'(t)
\end{pmatrix} dt.
\end{align}
Since
\begin{align*}
& \frac{d}{dx}\int_{x_0}^x  \frac{k^3g(t)}{W(t)} \det \begin{pmatrix} 1 & y_2(x) & y_3(x) \\ 1 & y_2(t) & y_3(t) \\  0 & y_2'(t) & y_3'(t)
\end{pmatrix} dt	
=  \int_{x_0}^x  \frac{k^3g(t)}{W(t)} \det \begin{pmatrix} 0 & y_2'(x) & y_3'(x) \\ 1 & y_2(t) & y_3(t) \\  0 & y_2'(t) & y_3'(t)
\end{pmatrix} dt	,
	\\
& \frac{d^2}{dx^2}\int_{x_0}^x  \frac{k^3g(t)}{W(t)} \det \begin{pmatrix} 1 & y_2(x) & y_3(x) \\ 1 & y_2(t) & y_3(t) \\  0 & y_2'(t) & y_3'(t)
\end{pmatrix} dt	
=  \int_{x_0}^x  \frac{k^3g(t)}{W(t)} \det \begin{pmatrix} 0  & y_2''(x) & y_3''(x) \\ 1 & y_2(t) & y_3(t) \\  0 & y_2'(t) & y_3'(t)
\end{pmatrix} dt	,
\end{align*}
we find that
\begin{align*}
C_1 = g(x_0) > 0, \qquad C_2 = g'(x_0) < 0, \qquad C_3 = g''(x_0) > 0.
\end{align*}
Abel's identity implies that
\begin{align*}
W(x) = Ce^{-\int_{x_0}^x 3p_1(t) dt}
\end{align*}
for some constant $C$; evaluation at $x = x_0$ gives $C = 1$, and hence
\begin{align*}
W(x) = e^{-\int_{x_0}^x 3p_1(t) dt} > 0.
\end{align*}
We will prove that $g'(x) \leq 0$ for all $x \leq x_{0}$. Using \eqref{eq for g}, we get
\begin{align}
g'(x) = &\; C_2y_2'(x) + C_3y_3'(x) - \int_{x_0}^x  k^3g(t) e^{\int_{x_0}^t 3p_1(t') dt'} \det \begin{pmatrix} y_2'(x) & y_3'(x) \\  y_2'(t) & y_3'(t)
\end{pmatrix} dt	 \label{gprimey}
\end{align}
Recall that $y_{2}'$ and $y_{3}'$ are two solutions of $u'' + 3p_1u' + 3p_2u=0$. We seek to transform this equation to a simpler form by introducing $v$ by 
$$u(x) = e^{F(x)}v(x),$$
where $F$ is some function yet to be determined. 
Letting $f = F'$, we find that $v$ satisfies
\begin{align}\label{v f equation}
v'' + (2f+3p_1)v' + (f^2 + f' + 3fp_1 + 3p_2) v = 0.
\end{align}
More explicitly, the coefficient of $v$ can be written as
$$f^2 + f' + 3fp_1 + 3p_2 = f^2 + f' + 3f((1-\omega)q_{0} + (1-\omega^2)\bar{q}_{0}) + 9|q_{0}|^2 - 9\omega q_{0}^2 - 9\omega^2 \bar{q}_{0}^2 - 3\omega q_{0}' - 3\omega^2 \bar{q}_{0}'.$$
To cancel the terms involving the derivative $q_{0}'$, we choose
$$f = 3\omega q_{0} + 3\omega^2 \bar{q}_{0}.$$
With this choice, we find somewhat remarkably that the coefficient of $v$ in \eqref{v f equation} vanishes identically, which means that $v$ satisfies
$$v'' + (2f+3p_1)v' = 0, \quad \text{i.e.} \quad v'' - 3(\omega^2q_{0} + \omega \bar{q}_{0})v' = 0.$$
It follows that
$$v(x) = C \int_{x_0}^x e^{3\int_{x_0}^{x'} (\omega^2q_{0} + \omega \bar{q}_{0})dt} dx' + D,$$
where $C$ and  $D$ are integration constants, and so
\begin{align*}
y' = e^{F(x)}v = C e^{\int_{x_0}^x 3(\omega q_{0} + \omega^2 \bar{q}_{0})dt}\int_{x_0}^x e^{3\int_{x_0}^{x'} (\omega^2q_{0} + \omega \bar{q}_{0})dt} dx' + De^{\int_{x_0}^x 3(\omega q_{0} + \omega^2 \bar{q}_{0})dt}.
\end{align*}
In particular, since $x_0$ lies to the right of the support of $q_0$, $y'(x_0) = D$ and $y''(x_0) = C$.
Using \eqref{y2 initial} and \eqref{y3 initial}, we obtain the following explicit expressions for $y_2'$ and $y_3'$:
\begin{align*}
& y_2'(x) = e^{\int_{x_0}^x 3(\omega q_{0} + \omega^2 \bar{q}_{0})dt}, 
	\\
& y_3'(x) = e^{\int_{x_0}^x 3(\omega q_{0} + \omega^2 \bar{q}_{0})dt}\int_{x_0}^x e^{3\int_{x_0}^{x'} (\omega^2q_{0} + \omega \bar{q}_{0})dt} dx'.
\end{align*}
Clearly, $y_{2}'(x)>0$ for all $x \in \R$, $y_3'(x) < 0$ for $x < x_0$, and $y_3'(x) > 0$ for $x > x_0$. Furthermore, we obtain
\begin{align*}
\det \begin{pmatrix} y_2'(x) & y_3'(x) \\  y_2'(t) & y_3'(t)
\end{pmatrix} =  e^{\int_{x_0}^x 3(\omega q_{0} + \omega^2 \bar{q}_{0})dt'}
e^{\int_{x_0}^t 3(\omega q_{0} + \omega^2 \bar{q}_{0})dt'}
\int_{x}^t e^{3\int_{x_0}^{x'} (\omega^2q_{0} + \omega \bar{q}_{0})dt'} dx',
\end{align*}
which is clearly $> 0$ for $x < t$ (and $< 0$ for $x > t$). By \eqref{gprimey}, this shows that $g'(x)<0$ whenever $x \leq x_{0}$. Since $g(x) = e^{xk} > 0$ for $x \geq x_{0}$, we conclude that $g(x)> 0$ (and hence also $f_3(x,k) > 0$) for all $x \in \R$. This proves (\ref{inequality for fj to be proved}) for $j = 3$ and completes the proof of (\ref{lol3}), and hence also of (\ref{general upper bounds}).

Let us now consider the general case of $q_0 \in \mathcal{S}(\mathbb{R})$ where $q_0$ is not necessarily compactly supported.
Let $\eta \in C_{c}^{\infty}$ be a cutoff function satisfying $\eta(x)=1$ for $|x|\leq 1$ and $\eta(x)=0$ for $|x|\geq 2$, and for $j \geq 1$, let $\eta_{j}(x) = \eta(x/j)$. Since $q_{0} \in \mathcal{S}(\mathbb{R})$, $(\eta_{j}q_{0})_{j \geq 1}$ is a sequence of smooth functions with compact support converging to $q_{0}$ in $\mathcal{S}(\mathbb{R})$ as $j \to \infty$. As in \cite[Lemma 4.5]{CL}, we have $r_{2}(k; \eta_{j}q_{0}) \to r_{2}(k;q_{0})$ as $j \to \infty$. The inequality in (\ref{lol4}) therefore follows from (\ref{lol3}). This completes the proof of \eqref{general upper bounds unbounded}.

\bigskip
\noindent
{\bf Acknowledgements.} {\it Support is acknowledged from the European Research Council, Grant Agreement No. 682537, the Swedish Research Council, Grant No. 2015-05430, and the Ruth and Nils-Erik Stenb\"ack Foundation.}

\bibliographystyle{plain}
\bibliography{is}

\begin{thebibliography}{99}
\small

\bibitem{B1872} 
J. V. Boussinesq, Th\'eorie des ondes et des remous qui se propagent le long d'un canal rectangulaire horizontal, en communiquant au liquide contene dans ce canal des vitesses sensiblement pareilles de la surface au fond, {\it J. Math. Pures Appl.} {\bf 17} (1872), 55--108.


\bibitem{BLS2017}
A. Boutet de Monvel, J. Lenells, and D. Shepelsky, Long-time asymptotics for the Degasperis--Procesi equation on the half-line, {\it Ann. Inst. Fourier (Grenoble)} {\bf 69} (2019), 171--230.

\bibitem{BS2013}
A. Boutet de Monvel and D. Shepelsky, A Riemann--Hilbert approach for the Degasperis--Procesi equation, {\it Nonlinearity} {\bf 26} (2013), 2081--2107.

\bibitem{C1991} 
F. Calogero, Why are certain nonlinear PDEs both widely applicable and integrable?. In: Zakharov V.E. (eds) What Is Integrability?. Springer Series in Nonlinear Dynamics. Springer, Berlin, Heidelberg (1991). 

\bibitem{CL} C. Charlier and J. Lenells, The ``good'' Boussinesq equation: a Riemann--Hilbert approach, {\it Indiana Univ. Math. J.}, to appear, arXiv:2003.02777.

\bibitem{CLW} C. Charlier, J. Lenells, and D. Wang, The ``good'' Boussinesq equation: long-time asymptotics, preprint, arXiv:2003.04789.

\bibitem{DP1999}
A. Degasperis and M. Procesi, Asymptotic integrability,
in {\it Symmetry and Perturbation Theory}, edited by A. Degasperis and G.
Gaeta, World Scientific (1999), pp. 23--37.

\bibitem{DZ1993}
P. Deift and X. Zhou, A steepest descent method for oscillatory Riemann--Hilbert problems. Asymptotics for the MKdV equation, 
{\it Ann. of Math.} {\bf 137} (1993), 295--368.

\bibitem{HL2020} 
L. Huang and J. Lenells, Asymptotics for the Sasa--Satsuma equation in terms of a modified Painlev\'{e} II transcendent, {\it J. Diff. Eq.} {\bf 268} (2020), 7480--7504. 

\bibitem{K1980}
D. J. Kaup, On the inverse scattering problem for cubic eigenvalue problems of the class $\psi _{xxx}+6Q\psi _{x}+6R\psi =\lambda \psi $, 
{\it Stud. Appl. Math.} {\bf 62} (1980), 189--216. 

\bibitem{L3x3}
J. Lenells, Initial-boundary value problems for integrable evolution equations with $3\times 3$ Lax pairs, {\it Physica D} {\bf 241} (2012), 857--875.

\bibitem{LGX2018}
H. Liu, X. Geng, and B. Xue, The Deift--Zhou steepest descent method to long-time asymptotics for the Sasa--Satsuma equation, {\it J. Diff. Eq.} {\bf 265} (2018), 5984--6008.
     
\bibitem{LG2019}
N. Liu and B. Guo, Long-time asymptotics for the Sasa--Satsuma equation via nonlinear steepest descent method, {\it J. Math. Phys.} {\bf 60} (2019), 011504, 25 pp.

 \bibitem{SS1991}
N. Sasa and J. Satsuma, New-type of soliton solutions for a higher-order nonlinear Schr\"odinger equation, {\it J. Phys. Soc. Japan} {\bf 60} (1991), 409--417.

\bibitem{SK1974}
K. Sawada and T. Kotera, A method for finding $N$-soliton solutions of the K.d.V. equation and K.d.V.-like equation, {\it Progr. Theoret. Phys.} {\bf 51} (1974), 1355--1367.

\bibitem{ZM1973}
V. E. Zakharov and S.V. Manakov, Resonant interaction of wave packets in nonlinear media, {\it JETP Lett.} {\bf 18} (1973), 243--245.

\end{thebibliography}

\end{document}